\theoremstyle{plain}
\newtheorem{theorem}{Theorem}[section]
\newtheorem*{theorem*}{Theorem}
\newtheorem{lemma}[theorem]{Lemma}
\newtheorem{proposition}[theorem]{Proposition}
\newtheorem{corollary}[theorem]{Corollary}
\newtheorem*{corollary*}{Corollary}
\theoremstyle{remark}
\newtheorem{remark}[theorem]{Remark}
\theoremstyle{definition}
\newtheorem{definition}[theorem]{Definition}
\newcommand{\Q}{\mathbb{Q}}
\newcommand{\Z}{\mathbb{Z}}
\newcommand{\R}{\mathbb{R}}
\newcommand{\A}{\mathcal{A}}
\newcommand{\Abar}{\overline{\mathcal{A}}}
\newcommand{\C}{\mathcal{C}}
\newcommand{\I}{^{-1}}
\newcommand{\defeq}{\mathrel{\vcentcolon =}}
\DeclareMathOperator{\SL}{SL}
\DeclareMathOperator{\GL}{GL}
\DeclareMathOperator{\Br}{Br}
\DeclareMathOperator{\Gal}{Gal}
\DeclareMathOperator{\inv}{inv}
\DeclareMathOperator{\Stab}{Stab}
\DeclareMathOperator{\Fix}{Fix}
\DeclareMathOperator{\diag}{diag}
\numberwithin{equation}{section}
\begin{document}

\title{Division algebras and transitivity of group actions on buildings}

\author{Matthew C. B. Zaremsky}
\address{Department of Mathematics \\
Bielefeld University \\
33615 Bielefeld, Germany}
\email{zaremsky@math.uni-bielefeld.de}
\date{\today}
\thanks{The author is supported in part by the SFB~$701$ of the DFG}
\subjclass[2010]{Primary 20E42; Secondary 16K20, 21E24}%buildings and BN-pairs; finite dimensional division rings; buildings;
\keywords{Division algebra, building, strongly transitive, Weyl transitive}

\begin{abstract}
Let~$D$ be a division algebra with center~$F$ and degree~$d>2$. Let~$K|F$ be any splitting field. We analyze the action of~$D^\times$ and $\SL_1(D)$ on the spherical and affine buildings that may be associated to $\GL_d(K)$ and $\SL_d(K)$, and in particular show it is never strongly transitive. In the affine case we find examples where the action is nonetheless Weyl transitive. This extends results of Abramenko and Brown concerning the~$d=2$ case, where strong transitivity is in fact possible. Our approach produces some explicit constructions, and we find that for~$d>2$ the failure of the action to be strongly transitive is quite dramatic.
\end{abstract}

\maketitle

\section{Introduction}
\label{sec:intro}

Let~$D$ be an~$F$-division algebra of degree~$d>2$, i.e., $\dim_F(D)=d^2$. Let~$K|F$ be any splitting field of~$D$, and consider~$D$ as an~$F$-subalgebra of $M_d(K)$. Let $D^\times$ be the multiplicative group of~$D$, so $D^\times \le \GL_d(K)$. The group $\GL_d(K)$ acts strongly transitively on the corresponding spherical building. One wonders, however, whether the action of the subgroup~$D^\times$ is also strongly transitive, or even what we call \emph{weakly} transitive; see Definition~\ref{weak_trans_def}. The key lemma we use is Lemma~\ref{key}. For~$D^\times$ to act weakly transitively, some conjugate of~$D^\times$ in~$\GL_d(K)$ must intersect every coset in~$N/T$, with notation explained in Section~\ref{sec:bldgs}. As shown in Section~\ref{sec:div_algs}, there exist ``many" cosets for which this is impossible, and in fact the action fails ``dramatically" to be weakly transitive. These results hold as well for~$\SL_1(D)\le \SL_d(K)$.

In case~$F$ is a global field, we can choose~$K$ such that $\SL_d(K)$ acts on an \emph{affine} building~$\Delta_a$, strongly transitively with respect to the complete apartment system. Then $\SL_1(D)$ does act Weyl transitively on $\Delta_a$, but as we show does not act strongly transitively with respect to any apartment system. Group theoretically, a Weyl transitive group that is not strongly transitively with respect to any apartment system corresponds to a Bruhat decomposition not arising from a $BN$-pair. Whether or not~$\SL_1(D)$ can admit any non-trivial~$BN$-pair at all is another interesting question. Recent work of G.~Prasad~\cite{prasad12} shows that at least there are no~$BN$-pairs that are \emph{weakly split}.

Using $\SL_1(D)$ to find Weyl transitive but not strongly transitive actions stems from a suggestion of Tits, in Section~3.1 of his paper \cite{tits92}. In a paper by Abramenko and Brown \cite{abramenko07} the~$d=2$ case is shown to produce both examples and non-examples. Also, in \cite{zar_chev} examples of such actions were found using a different method, yielding results for buildings of arbitrary affine type. In the present work we show that for~$d>2$, $\SL_1(D)$ never acts strongly transitively on $\Delta_a$, and its failure to do so is quite extreme; see Section~\ref{sec:div_algs}.

If~$F$ is~$\Q$ or some other global field with certain properties, we prove some related results concerning cyclotomic subfields. Namely if~$D$ is a division algebra with center~$\Q$, or one of certain other global fields, and degree~$d>2$, then~$D$ cannot contain a primitive~$2d^{\text{th}}$ root of unity. Also, if~$d$ is odd,~$D$ cannot contain a primitive~$d^{\text{th}}$ root of unity. This further restricts the action of~$D^\times$ and $\SL_1(D)$ on the buildings; see Section~\ref{sec:div_algs_Q} for details.

Lastly we consider the case when~$D$ is a crossed product and~$N/T$ is constructed with respect not to an arbitrary apartment but to the fundamental apartment (see Section~\ref{sec:bldgs} for definitions and notation). In this scenario, we have a \emph{precise} description of which cosets in~$N/T$ can be represented in~$D^\times$. Thinking of $N/T=W\cong S_d$, the Weyl group elements representable in~$D^\times$ form a subgroup of order~$d$, isomorphic to a certain Galois group. Also, we see that the question of which cosets are representable in $\SL_1(D)$ is at least as difficult as determining the order of $[D]$ in the Brauer group. For~$F$ a global field we obtain that $\SL_1(D)$ can only represent the trivial coset and in some very restrictive cases one additional coset. In particular in the~$d=2$ case we recover the main result of \cite{abramenko07}. See Section~\ref{sec:fund_apt} for details.

It should be noted that there is a nice elegant proof due to A.~Rapinchuk, in some more generality, that $D^\times$ misses at least \emph{some} coset in~$N/T$ \cite{rapinchuk}. Also, arguments of K.~Tent~\cite{tent} establish this fact, and imply that $D^\times$ misses the vast majority of cosets in~$N/T$. In the present work examples of such cosets are explicitly constructed, and if $N/T$ is taken with respect to the fundamental apartment, then a precise characterization of such cosets is often possible.

\subsection*{Acknowledgments}

This paper is based on part of the author's Ph.D.~thesis work at the University of Virginia. He acknowledges the support of the department, and his advisor Peter Abramenko. He is also grateful to Andrei Rapinchuk and Katrin Tent for helpful conversations and for explaining simplified proofs of certain results.

%---------------------

\section{Buildings}\label{sec:bldgs}

Let $(W,S)$ be a Coxeter system,~$\Delta$ a building of type $(W,S)$ and~$G$ a group acting on~$\Delta$ via type-preserving automorphisms. Let $\C$ denote the set of chambers of~$\Delta$ and $\delta \colon \C\times\C \to W$ the Weyl distance function. In general, we denote apartment systems by $\A$ and the complete apartment system by~$\Abar$. For $C\in\C$ and $\A$ an apartment system, let $\A(C) \defeq \{\Sigma\in\A\mid C\in~\A\}$. For $\Sigma\in\Abar$ let $\C(\Sigma) \defeq \{C\in\C\mid C\in\Sigma\}$. We will not define or even describe any of these objects here; definitions and a wealth of details can be found throughout \cite{abramenko08}.

There are three transitivity properties that are of interest.

\begin{definition}\label{weak_trans_def}
We say the action of~$G$ on~$\Delta$ is \emph{weakly transitive} if there exists an apartment~$\Sigma\in\Abar$ such that $\Stab_G(\Sigma)$ acts transitively on $\C(\Sigma)$.
\end{definition}

\begin{definition}
We say a chamber transitive action of~$G$ on~$\Delta$ is \emph{strongly transitive with respect to} a~$G$-invariant apartment system $\A$ if it is transitive on $\A$ and there exists an apartment $\Sigma\in\A$ such that $\Stab_G(\Sigma)$ acts transitively on $\C(\Sigma)$, or equivalently if there exists~$C\in\C$ such that $\Stab_G(C)$ acts transitively on $\A(C)$. See \cite[Definition~6.1]{abramenko08}.
\end{definition}

\begin{definition}
We say a chamber transitive action of~$G$ on~$\Delta$ is \emph{Weyl transitive} if there exists $C\in\C$ such that $\Stab_G(C)$ acts transitively on the ``$w$-sphere" $\{D\in\C \mid \delta(C,D)=w\}$ for all $w\in W$. See \cite[Definition~6.10]{abramenko08}.
\end{definition}

One advantage of Weyl transitivity is that it makes no reference to apartments, and so does not depend on a choice of apartment system. We state here a proposition relating the three conditions.

\begin{proposition}\label{str_wk_wl}
If~$G$ acts strongly transitively with respect to $\A$ then it acts Weyl transitively and weakly transitively. If~$G$ acts Weyl transitively and weakly transitively then~$G$ acts strongly transitively with respect to some apartment system.
\end{proposition}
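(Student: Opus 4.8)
The plan is to prove the two implications separately, in each case reducing everything to the combinatorics of Weyl distance inside a single apartment, where the chamber at a prescribed Weyl distance from a fixed base chamber is unique.

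For the first implication I would use the standard reformulation of strong transitivity with respect to~$\A$ as transitivity of~$G$ on the set of pairs $(\Sigma,C)$ with $\Sigma\in\A$ and $C\in\C(\Sigma)$; this is exactly the ``or equivalently'' clause of the definition. Weak transitivity is then immediate, since the distinguished $\Sigma\in\A\subseteq\Abar$ with $\Stab_G(\Sigma)$ transitive on $\C(\Sigma)$ witnesses Definition~\ref{weak_trans_def} directly. For Weyl transitivity, fix a chamber $C_0$ lying in some $\Sigma_0\in\A$ and take $D,D'\in\C$ with $\delta(C_0,D)=\delta(C_0,D')=w$. Using the axiom that any two chambers of an apartment system lie in a common apartment, choose $\Sigma,\Sigma'\in\A$ with $C_0,D\in\Sigma$ and $C_0,D'\in\Sigma'$. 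Transitivity on pairs supplies $g\in G$ with $g\Sigma=\Sigma'$ and $gC_0=C_0$; since $g$ is type-preserving it preserves Weyl distance, so $gD\in\Sigma'$ satisfies $\delta(C_0,gD)=w$. As the chamber at Weyl distance~$w$ from $C_0$ inside the apartment $\Sigma'$ is unique, we get $gD=D'$, so $g\in\Stab_G(C_0)$ carries $D$ to $D'$. This is Weyl transitivity.

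For the converse the plan is to build the required apartment system by hand. Let $\Sigma_0\in\Abar$ be an apartment witnessing weak transitivity, so $\Stab_G(\Sigma_0)$ is transitive on $\C(\Sigma_0)$, and set $\A \defeq G\cdot\Sigma_0=\{g\Sigma_0\mid g\in G\}$. This is $G$-invariant and $G$ acts transitively on it by construction, so once $\A$ is known to be an apartment system, strong transitivity follows at once, with $\Sigma_0\in\A$ and $\Stab_G(\Sigma_0)$ transitive on $\C(\Sigma_0)$. First I would replace the base chamber of Weyl transitivity by one inside $\Sigma_0$: starting from the chamber $C_1$ provided by the definition and using chamber transitivity to conjugate $\Stab_G(C_1)$ onto $\Stab_G(C_0)$ for a chosen $C_0\in\C(\Sigma_0)$, one checks (type-preservation again) that $\Stab_G(C_0)$ is transitive on every $w$-sphere about $C_0$.

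The heart of the argument, and the step I expect to require the most care, is verifying that $\A$ satisfies the apartment-system axioms of \cite{abramenko08}. The axiom asserting, for two apartments sharing a pair of simplices, an isomorphism fixing them is inherited for free, since every element of $\A$ lies in $\Abar$ and $\Abar$ already satisfies it. The substantive point is that any two chambers $C,D$ must lie in a common apartment of~$\A$. To see this, set $w=\delta(C,D)$ and use chamber transitivity to find $g$ with $gC=C_0$, so that $\delta(C_0,gD)=w$; let $D_0$ be the unique chamber of $\Sigma_0$ at Weyl distance~$w$ from $C_0$. Weyl transitivity about $C_0$ then provides $h\in\Stab_G(C_0)$ with $h(gD)=D_0$, whence $hgC=C_0$ and $hgD=D_0$ both lie in $\Sigma_0$, and therefore $C,D\in(hg)\I\Sigma_0\in\A$. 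Passing from chambers to arbitrary simplices is routine (take chambers containing them). The only genuine obstacle is keeping the building-theoretic bookkeeping honest — in particular the repeated use that inside any apartment the Weyl distance from a fixed chamber determines the second chamber uniquely, which is precisely what makes both the ``move two chambers into $\Sigma_0$'' step and the ``uniqueness forces $gD=D'$'' step go through.
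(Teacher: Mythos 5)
Your argument is correct. The paper itself gives no proof of this proposition, deferring entirely to \cite[Proposition~6.14]{abramenko08}, and your two implications --- uniqueness of the chamber at prescribed Weyl distance within an apartment for the forward direction, and taking $\A \defeq G\cdot\Sigma_0$ and verifying the ``any two simplices lie in a common apartment'' axiom via Weyl transitivity for the converse --- are precisely the standard argument given in that reference.
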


The second implication is stated for completeness, though we will not use it here. As such, we will not worry about its impreciseness regarding choice of apartment systems, though this is easy to fix. See \cite[Proposition~6.14]{abramenko08} for the proof of the proposition and a more precise version of the second implication. The important thing for us is, to show a group action is not strongly transitive with respect to \emph{any} apartment system, it suffices to show that there exists no $\Sigma\in\Abar$ satisfying the weak transitivity condition.

We are progressing toward our key Lemma~\ref{key}, which gives a group-theoretic condition for weak transitivity. We now suppose that~$G$ acts strongly transitively on~$\Delta$ with respect to~$\Abar$. For $\Sigma_0\in\Abar$ set $N \defeq \Stab_G(\Sigma_0)$, and let $T =\Fix_G(\Sigma_0) \defeq \{t\in N\mid tC=C$ for all chambers $C$ in $\Sigma_0\}$. Since~$G$ acts strongly transitively,~$N/T$ is isomorphic to the group of all type-preserving automorphisms of~$\Sigma_0$, i.e., to~$W$.

We now prove the key lemma regarding the action of subgroups~$H$ of~$G$.

\begin{lemma}\label{key}
Let~$H$ be a subgroup of~$G$. Then~$H$ acts weakly transitively on~$\Delta$ if and only if there exists~$g\in G$ such that $(gHg\I\cap N)T=N$.
\end{lemma}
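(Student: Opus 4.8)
The statement is a biconditional relating weak transitivity of the subgroup $H$ to a group-theoretic condition on how a conjugate of $H$ meets $N$. Since weak transitivity asks for the existence of an apartment $\Sigma \in \Abar$ on which $\Stab_H(\Sigma)$ acts transitively on the chambers $\C(\Sigma)$, the plan is to transport everything to the fixed reference apartment $\Sigma_0$ by exploiting the strong transitivity of the ambient group $G$ on $\Abar$.

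Let me think about how to set this up. Since $G$ acts strongly transitively with respect to $\Abar$, it acts transitively on $\Abar$, so any apartment $\Sigma$ can be written as $\Sigma = g\Sigma_0$ for some $g \in G$. The first step is to reformulate the condition ``$\Stab_H(\Sigma)$ acts transitively on $\C(\Sigma)$'' after pulling back by $g$. Applying $g\I$ is a type-preserving automorphism of $\Delta$ carrying $\Sigma$ to $\Sigma_0$ and $\C(\Sigma)$ to $\C(\Sigma_0)$, and it conjugates $\Stab_H(\Sigma)$ to $\Stab_{g\I H g}(\Sigma_0)$. Hence $\Stab_H(\Sigma)$ acts transitively on $\C(\Sigma)$ if and only if $\Stab_{g\I H g}(\Sigma_0) = (g\I H g) \cap N$ acts transitively on $\C(\Sigma_0)$. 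Replacing $g\I$ by $g$ (renaming), we see that $H$ acts weakly transitively if and only if there exists $g \in G$ such that $(gHg\I) \cap N$ acts transitively on the chambers of $\Sigma_0$.

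The second step is to translate ``acts transitively on $\C(\Sigma_0)$'' into the coset equation $(gHg\I \cap N)T = N$. Here I would use the identification $N/T \cong W$ coming from strong transitivity: $N/T$ is the full group of type-preserving automorphisms of $\Sigma_0$, which acts simply transitively on $\C(\Sigma_0)$, since $W$ acts simply transitively on the chambers of an apartment (equivalently, on itself by left translation). The subgroup $T = \Fix_G(\Sigma_0)$ acts trivially on $\C(\Sigma_0)$ by definition, so the action of any subgroup $M \le N$ on $\C(\Sigma_0)$ factors through its image $MT/T$ in $N/T \cong W$. Consequently $M$ acts transitively on $\C(\Sigma_0)$ if and only if $MT/T = N/T$, i.e. $MT = N$. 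Taking $M = gHg\I \cap N$ gives exactly $(gHg\I \cap N)T = N$.

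The main obstacle, and the step deserving genuine care, is the second one: verifying that the action of $gHg\I \cap N$ on $\C(\Sigma_0)$ is faithfully detected by the image in $W = N/T$, and that transitivity corresponds precisely to surjectivity onto $N/T$. This rests on $W$ acting \emph{simply} transitively on $\C(\Sigma_0)$, so that a subgroup of $W$ is transitive iff it is all of $W$; one must confirm that $T$ is exactly the kernel of the $N$-action on $\C(\Sigma_0)$, which is immediate from the definitions of $N$ and $T$. The first step is essentially formal once one notes that $G$ permutes $\Abar$ transitively and that conjugation intertwines stabilizers of an apartment with stabilizers of its image; the only mild care needed is the harmless relabeling $g \leftrightarrow g\I$, which is permissible since $g$ ranges over all of $G$.
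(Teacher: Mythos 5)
Your proposal is correct and follows essentially the same route as the paper's proof: transport the distinguished apartment to $\Sigma_0$ using transitivity of $G$ on $\Abar$, identify $\Stab_{gHg\I}(\Sigma_0)$ with $gHg\I\cap N$, and translate transitivity on $\C(\Sigma_0)$ into the coset equation via $N/T\cong W$. The only difference is that you spell out the simple-transitivity argument for the last step, which the paper leaves implicit.
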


\begin{proof}
Suppose~$H$ acts weakly transitively on~$\Delta$. Choose $\Sigma\in\Abar$ such that $\Stab_{H}(\Sigma)$ acts transitively on~$\C(\Sigma)$. Since~$G$ acts transitively on~$\Abar$ we can choose~$g\in G$ such that $g\Sigma=\Sigma_0$. Then $\Stab_{gHg\I}(\Sigma_0)=gHg\I\cap N$ acts transitively on~$\C(\Sigma_0)$, and so $(gHg\I\cap N)T=N$. Conversely, if $(gHg\I\cap N)T=~N$ then $gHg\I\cap N=\Stab_{gHg\I}(\Sigma_0)$ acts transitively on~$\C(\Sigma_0)$. But this means $\Stab_H(g\I\Sigma_0)$ acts transitively on $\C(g\I\Sigma_0)$, and so~$H$ acts weakly transitively.
\end{proof}

\begin{corollary}\label{not_str_trans}
Let~$H$ be a subgroup of~$G$. Suppose for all~$g\in G$ there exists $n\in N$ such that $gnTg\I\cap H=\emptyset$. Then the action of~$H$ on~$\Delta$ is not weakly transitive, and in particular is not strongly transitive with respect to any apartment system.
\end{corollary}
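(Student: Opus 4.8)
The plan is to prove this directly as a reformulation of Lemma~\ref{key}: essentially no new work is needed beyond unwinding the coset condition into its contrapositive form. First I would recall that, by Lemma~\ref{key}, $H$ \emph{fails} to act weakly transitively precisely when $(gHg\I\cap N)T \ne N$ for every $g\in G$. The goal is then to show that this negated criterion is literally the same statement as the hypothesis of the corollary.

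The key observation is that $T$ is normal in $N$: if $n\in N$ stabilizes $\Sigma_0$ and $t\in T$ fixes every chamber of $\Sigma_0$, then $ntn\I$ fixes every chamber $n(C)$ of $\Sigma_0$, hence lies in $T$. Thus $N/T$ is a group (indeed $\cong W$), and the equality $(gHg\I\cap N)T = N$ says exactly that $gHg\I\cap N$ surjects onto $N/T$, i.e.\ meets every coset $nT$ for $n\in N$. Since $nT\subseteq N$, intersecting with $N$ is redundant, so this is equivalent to $gHg\I\cap nT \ne\emptyset$ for all $n\in N$. Finally, translating through conjugation one has $gHg\I\cap nT\ne\emptyset$ if and only if $H\cap g\I nTg\ne\emptyset$.

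Putting these together, $H$ acts weakly transitively iff there exists $g\in G$ with $H\cap g\I nTg\ne\emptyset$ for all $n\in N$. Negating, $H$ fails to be weakly transitive iff for every $g\in G$ there exists $n\in N$ with $H\cap g\I nTg = \emptyset$. Replacing $g$ by $g\I$ (harmless, as $g$ ranges over all of $G$) turns this into precisely the hypothesis $gnTg\I\cap H = \emptyset$, so the hypothesis forces $H$ to not act weakly transitively. The ``in particular'' clause is then immediate from Proposition~\ref{str_wk_wl}: strong transitivity with respect to any apartment system implies weak transitivity, so failure of weak transitivity rules out strong transitivity with respect to every apartment system.

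The only point requiring care, rather than being a genuine obstacle, is the bookkeeping of inverses and the observation that $nT\subseteq N$ lets us drop the ambient intersection with $N$; there is no substantive difficulty here, since the statement is a direct logical rearrangement of Lemma~\ref{key} combined with the normality of $T$ in $N$.
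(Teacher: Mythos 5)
Your proof is correct and takes the same route as the paper, which simply declares the corollary ``immediate'' from Proposition~\ref{str_wk_wl} and Lemma~\ref{key}; you have merely filled in the routine coset and conjugation bookkeeping that makes it immediate.
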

\begin{proof}
This is immediate from Proposition~\ref{str_wk_wl} and Lemma~\ref{key}.
\end{proof}

\section{Division algebras}\label{sec:div_algs}

We now specialize to the situation of interest. Let~$F$ be any field and~$D$ a division algebra of degree~$d>2$ with center~$F$. Consider the multiplicative group~$D^\times$ of~$D$. Let~$K$ be any splitting field of~$D$, so we can think of~$D$ as a subalgebra of $M_d(K)$, and~$D^\times$ as a subgroup of~$\GL_d(K)$. What we are really doing is identifying~$D$ with its image in $M_d(K)$ under some splitting representation $D\hookrightarrow M_d(K)$, but we will not run into any difficulties by ignoring the choice of representation and so will just treat~$D$ as a subalgebra of $M_d(K)$.

Now let~$\Delta$ be the spherical building associated to $\GL_d(K)$, with fundamental apartment~$\Sigma_0$. Then $N=\Stab_{\GL_d(K)}(\Sigma_0)$ is the subgroup of monomial matrices, $T=\Fix_{\GL_d(K)}(\Sigma_0)$ is the subgroup of diagonal matrices, and the Weyl group $W=N/T$ is naturally isomorphic to the symmetric group~$S_d$ \cite[Section~6.5]{abramenko08}.

We collect some facts about subfields of division algebras, all of which follow for example from \cite[Corollary~3.17]{farb93}.

\begin{proposition}\label{div_alg_facts}

For $E$ a subfield of~$D$, the following hold:

\noindent 1. The degree~$[E \colon F]$ divides~$d$.

\noindent 2. The degree~$[E \colon F]$ equals~$d$ if and only if $E$ is a maximal subfield of~$D$.

\noindent 3. If $E$ is a maximal subfield of~$D$, then~$D$ splits over $E$.
\end{proposition}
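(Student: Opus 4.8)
The three assertions are standard consequences of the structure theory of central simple algebras, and the plan is to derive them from the double centralizer theorem applied to the central simple $F$-algebra $D$ (with $\dim_F D = d^2$) and its commutative simple subalgebra $E$. Throughout I write $C_D(E)$ for the centralizer of $E$ in $D$. Since $E$ is commutative we have $E \subseteq C_D(E)$, and $C_D(E)$, being a finite-dimensional subalgebra of the division algebra $D$, is itself a division algebra; moreover the double centralizer theorem gives $C_D(C_D(E)) = E$, so the center of $C_D(E)$ is $C_D(E)\cap E = E$. For part~1, the double centralizer theorem supplies the dimension count $[E:F]\cdot \dim_F C_D(E) = \dim_F D = d^2$. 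As $C_D(E)$ is a division algebra central over $E$, its $E$-dimension is a perfect square, say $\dim_E C_D(E) = m^2$, whence $\dim_F C_D(E) = [E:F]\,m^2$. Substituting gives $[E:F]^2 m^2 = d^2$, i.e.\ $[E:F]\,m = d$, so $[E:F]$ divides $d$.

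Part~2 then follows by reading off the equality case of this relation. First I would note that a subfield $E$ is maximal exactly when $C_D(E) = E$: if $x \in C_D(E)\setminus E$, then $E[x]$ is a commutative domain, hence a field, properly containing $E$; conversely any subfield properly containing $E$ is contained in $C_D(E)$. The condition $C_D(E) = E$ is precisely $m = 1$, which by $[E:F]\,m = d$ is equivalent to $[E:F] = d$.

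For part~3, suppose $E$ is maximal, so $[E:F] = d$, and I would exhibit $E$ as a splitting field via the regular representation. Viewing $D$ as a left $D$-module and a right $E$-module by multiplication realizes $D$ as a module over $D \otimes_F E$ (using $E^{\mathrm{op}} = E$), and as a right $E$-vector space $\dim_E D = d^2/d = d$, so $\mathrm{End}_E(D) \cong M_d(E)$. The resulting algebra homomorphism $\phi\colon D \otimes_F E \to \mathrm{End}_E(D) \cong M_d(E)$ has trivial kernel, since $D \otimes_F E$ is the base change of the central simple $F$-algebra $D$ and is therefore simple over $E$. Comparing $E$-dimensions, $\dim_E(D \otimes_F E) = d^2 = \dim_E M_d(E)$ forces $\phi$ to be an isomorphism, so $D \otimes_F E \cong M_d(E)$, i.e.\ $E$ splits $D$.

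The genuine content, and the main obstacle, is the double centralizer theorem, on which parts~1 and~2 rest; once it is available the remaining steps are formal dimension counts and the well-definedness and injectivity of $\phi$. In the interest of brevity one may instead simply invoke \cite[Corollary~3.17]{farb93}, as the author does.
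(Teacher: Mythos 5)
Your argument is correct: the double centralizer theorem gives $[E:F]\,m=d$ with $m^2=\dim_E C_D(E)$, the equivalence of maximality with $C_D(E)=E$ handles part~2, and the regular-representation map $D\otimes_F E\to \mathrm{End}_E(D)\cong M_d(E)$ (injective by simplicity, bijective by dimension count) handles part~3. The paper itself offers no proof, simply citing \cite[Corollary~3.17]{farb93}, and what you have written is precisely the standard derivation underlying that citation, so this is essentially the same approach with the details filled in.
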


One further crucial fact is the following, which follows from \cite[Section~16.1]{pierce82}.

\begin{proposition}\label{char_poly_coeffs}
For $z\in D$, the characteristic polynomial $\chi_z(t)$ is independent of the choice of splitting field~$K$, and has coefficients in~$F$. In particular the trace of any $z\in D$ lies in~$F$.
\end{proposition}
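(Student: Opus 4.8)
The plan is to define $\chi_z$ by base-changing to an algebraic closure, verify that the definition is insensitive to all the choices involved via the Skolem--Noether theorem, and then descend the coefficients to $F$ using the Galois action; independence of the arbitrary splitting field $K$ will follow from the fact that characteristic polynomials are unchanged under further field extension.

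First I would fix an algebraic closure $\overline{F}$ of $F$ together with an $\overline{F}$-algebra isomorphism $\phi\colon D\otimes_F\overline{F}\xrightarrow{\sim}M_d(\overline{F})$, which exists since $D$ is split by $\overline{F}$, and set $\chi_z(t)\defeq\det\bigl(tI-\phi(z\otimes 1)\bigr)\in\overline{F}[t]$. The first thing to check is that $\chi_z$ does not depend on $\phi$. Any second isomorphism $\phi'$ differs from $\phi$ by an $\overline{F}$-algebra automorphism of $M_d(\overline{F})$, and by Skolem--Noether every such automorphism is inner, i.e.\ conjugation by some $g\in\GL_d(\overline{F})$. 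Since conjugate matrices have equal characteristic polynomials, $\phi$ and $\phi'$ yield the same $\chi_z$.

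Next I would show the coefficients of $\chi_z$ lie in $F$ by a descent argument. Each $\sigma\in\Gal(\overline{F}|F)$ acts on $M_d(\overline{F})$ entrywise, and $\sigma\circ\phi\circ(\mathrm{id}_D\otimes\sigma\I)$ is again a splitting isomorphism; applying it to $z\otimes 1$ and taking determinants produces exactly $\sigma(\chi_z)$, the polynomial obtained by letting $\sigma$ act on the coefficients of $\chi_z$ (here one uses $\det(\sigma(N))=\sigma(\det(N))$ and that $\sigma$ fixes $z\otimes 1$). By the independence of $\phi$ just established, $\sigma(\chi_z)=\chi_z$ for every $\sigma$, so the coefficients are $\Gal(\overline{F}|F)$-fixed and hence lie in $F$, with the usual separate check that the purely inseparable part causes no trouble when $F$ is imperfect. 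Finally, to see that an arbitrary splitting field $K$ gives the same answer, I would embed the computation over $K$ into one over a field containing both $K$ and $\overline{F}$: the characteristic polynomial of a matrix is invariant under any field extension, so the polynomial computed from a splitting isomorphism over $K$ agrees, after enlarging the field, with the one computed over $\overline{F}$, and therefore equals $\chi_z$. The statement about the trace is then immediate, since $-\operatorname{tr}(z)$ is the coefficient of $t^{d-1}$ in $\chi_z$.

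I expect the main obstacle to be the coefficient-descent step, both in making the Galois twist $\sigma\circ\phi\circ(\mathrm{id}_D\otimes\sigma\I)$ precise as an $\overline{F}$-algebra map and, more seriously, in handling the case where $F$ is not perfect, where $\Gal(\overline{F}|F)$-invariance only pins the coefficients down to the purely inseparable closure and one must argue separately that they actually lie in $F$. Relatedly, care is needed in the last step because $K$ may be transcendental over $F$, so one cannot simply embed $K$ into $\overline{F}$; the clean fix is to pass to a common overfield of $K$ and $\overline{F}$ and to invoke the field-extension invariance of the characteristic polynomial there.
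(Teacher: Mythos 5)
The paper offers no proof of this proposition at all; it simply cites \cite[Section~16.1]{pierce82}, and what you have written is essentially the standard argument found there: define the reduced characteristic polynomial over a splitting isomorphism, use Skolem--Noether to kill the dependence on the isomorphism, and use Galois descent for the rationality of the coefficients. The Skolem--Noether step and the Galois-twist computation are both correct as you state them, and your handling of a possibly transcendental splitting field $K$ (pass to a common overfield of $K$ and $\overline{F}$, e.g.\ a residue field of $K\otimes_F\overline{F}$, and use invariance of characteristic polynomials under field extension) is the right fix.

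The one point you flag but do not actually close is the descent step when $F$ is imperfect: $\Gal(\overline{F}|F)$-invariance only places the coefficients in the purely inseparable closure of $F$, and saying that ``the purely inseparable part causes no trouble'' is not an argument. The standard repair is to run the entire construction over the separable closure $F_{\mathrm{sep}}$ instead of $\overline{F}$: by the K\"othe--Noether theorem every central division algebra contains a maximal subfield separable over $F$, so $D$ is already split by $F_{\mathrm{sep}}$, Skolem--Noether and the twisting argument go through verbatim over $F_{\mathrm{sep}}$, and now $F_{\mathrm{sep}}^{\Gal(F_{\mathrm{sep}}|F)}=F$ exactly, with no inseparability caveat. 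With that substitution your proof is complete and is, up to the choice of closure, the proof the cited reference gives.
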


We can now immediately show that~$D^\times$ does not act weakly transitively on~$\Delta$.

\begin{theorem}\label{trace_arg}
There exists a coset~$X$ in~$N/T$ such that for any $g\in\GL_d(K)$, we have $g D^\times g\I\cap X=\emptyset$.
\end{theorem}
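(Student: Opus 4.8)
The plan is to exhibit one explicit coset and eliminate it using only that traces of elements of~$D$ are $F$-rational (Proposition~\ref{char_poly_coeffs}) together with the defining property of a division algebra, that every nonzero element is invertible. Concretely, I would take~$X$ to be the coset in $N/T\cong S_d$ corresponding to a permutation~$\sigma$ with exactly one fixed point, for instance the $(d-1)$-cycle $\sigma=(2\,3\,\cdots\,d)$ fixing the index~$1$. Since $d>2$, such a~$\sigma$ exists and is not the identity, so~$X$ is not the trivial coset~$T$. A monomial matrix~$M$ representing~$X$ then has its nonzero pattern following~$\sigma$, so it has a single nonzero diagonal entry~$q\in K^\times$ (in position~$1$); consequently $e_1$ is an eigenvector of~$M$ with eigenvalue~$q$, and $\operatorname{tr}(M)=q$.

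First I would assume for contradiction that $M\in gD^\times g\I\cap X$ for some $g\in\GL_d(K)$, and write $M=gzg\I$ with $z\in D^\times$. Because the trace is conjugation-invariant and $\operatorname{tr}(z)\in F$ by Proposition~\ref{char_poly_coeffs}, this gives $q=\operatorname{tr}(M)=\operatorname{tr}(z)\in F$. The next step, which is the heart of the argument, is that~$q$ is an eigenvalue of~$M$ and hence of the conjugate matrix~$z$, so that $z-qI$ is singular. Since $q\in F$ is central, $z-q$ is an honest element of~$D$ whose matrix $z-qI$ is non-invertible; as~$D$ is a division algebra this forces $z-q=0$, i.e.\ $z=q\in F$.

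To finish, I would observe that once~$z$ is a central scalar we have $M=g(qI)g\I=qI$, a diagonal matrix, which represents the trivial coset~$T$ rather than~$X$. This contradicts $\sigma\neq\mathrm{id}$, so no such~$M$ exists and $gD^\times g\I\cap X=\emptyset$ for every~$g$. Combined with Corollary~\ref{not_str_trans}, this is exactly what is needed to conclude that~$D^\times$ does not act weakly transitively, and in particular not strongly transitively with respect to any apartment system.

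The step I expect to demand the most care is isolating~$q$ as a genuine $F$-rational eigenvalue and then upgrading ``singular image'' to ``zero in~$D$''. The trace pins down~$q$ only because~$\sigma$ has a \emph{unique} fixed point; additional fixed points would contribute further summands to $\operatorname{tr}(M)$, so the shape of~$\sigma$ is doing real work, and this is precisely where the hypothesis $d>2$ enters (for $d=2$ no permutation has exactly one fixed point). The passage from $z-qI$ singular to $z-q=0$ is where the division-algebra hypothesis is essential: it is the only point at which I use that~$D$ is a \emph{division} algebra rather than an arbitrary central simple algebra.
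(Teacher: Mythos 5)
Your proof is correct and follows essentially the same route as the paper: choose the coset of a permutation with a unique fixed point, use Proposition~\ref{char_poly_coeffs} to see that the lone diagonal entry $q=\operatorname{tr}(z)$ lies in $F$, and then use the division-algebra property to force $z-q=0$, which is incompatible with the coset being nontrivial. The only cosmetic differences are that you phrase the singularity of $z-qI$ via the eigenvector $e_1$ rather than via a zero row, and you derive the final contradiction from $M=qI$ lying in $T$ rather than from $c=0$ contradicting invertibility; both are equivalent to the paper's argument.
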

\begin{proof}
Let $w\in S_d$ be any permutation that fixes precisely one element of $\{1,\dots,d\}$. Since $d>2$ such a~$w$ exists, and without loss of generality we may suppose that fixed point is~$d$. Let~$X$ denote the coset in~$N/T$ corresponding to~$w$, so~$X$ consists only of matrices of the form $A=\diag(B,c)$, where $B$ is a $(d-1)$-by-$(d-1)$ monomial matrix with zeros on its diagonal.

Now let $z\in D^\times$, $g\in\GL_d(K)$ and suppose $gzg\I\in X$. Say $gzg\I=A=\diag(B,c)$. Since~$z$ has trace $c$, by Proposition~\ref{char_poly_coeffs} we see that $c\in F$. Hence $c\in D$, and so also $z-c\in D$ (recall we have suppressed the embedding $D\hookrightarrow M_d(K)$). Thus, the matrix $g(z-c)g\I$ must either be zero or invertible. Since $gzg\I=A$ and $c=cI_d$ is central in $\GL_d(K)$, we have $g(z-c)g\I = A-cI_d = \diag(B-cI_{d-1},0)$. This is not invertible since the bottom row consists of zeros. Also, if $B-cI_{d-1}$ is zero then by construction of $B$, we have $c=0$. This contradicts the assumption that~$z$ is invertible.
\end{proof}

\begin{corollary}\label{trace_bldg}
$D^\times$ does not act weakly transitively on~$\Delta$.
\end{corollary}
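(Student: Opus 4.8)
The plan is to deduce this directly from Theorem~\ref{trace_arg} together with Corollary~\ref{not_str_trans}, taking $G=\GL_d(K)$ and $H=D^\times$. The theorem already carries all the substance of the argument; what remains is only to match its conclusion against the hypothesis of the corollary, which amounts to a routine reindexing of the conjugating element.

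First I would write the coset furnished by Theorem~\ref{trace_arg} as $X=n_0 T$ for a fixed monomial matrix $n_0\in N$. The theorem then reads: for every $g\in\GL_d(K)$ we have $gD^\times g\I\cap n_0 T=\emptyset$. Conjugating this relation by $g\I$ yields $D^\times\cap g\I n_0 T g=\emptyset$, and since inversion $g\mapsto g\I$ permutes $\GL_d(K)$, I may rewrite it as $g n_0 T g\I\cap D^\times=\emptyset$ for every $g\in\GL_d(K)$. This is exactly the hypothesis of Corollary~\ref{not_str_trans}: for each $g$ there exists $n\in N$ with $gnTg\I\cap D^\times=\emptyset$, where in fact the single matrix $n_0$ works uniformly in $g$. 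The corollary then gives at once that the action of $D^\times$ on $\Delta$ is not weakly transitive, which is the claim.

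As an alternative that bypasses the corollary, one could argue straight from Lemma~\ref{key}: if $D^\times$ acted weakly transitively, there would be some $g$ with $(gD^\times g\I\cap N)T=N$, forcing $gD^\times g\I\cap N$ to meet every $T$-coset in $N$, and in particular the coset $X$, in contradiction with Theorem~\ref{trace_arg}. Either route is immediate. I do not expect any genuine obstacle here; the only point deserving a moment's attention is the passage between conjugating $D^\times$ and conjugating the coset $n_0 T$, i.e.\ the $g\leftrightarrow g\I$ substitution, and this is settled simply by noting that inversion is a bijection of $\GL_d(K)$.
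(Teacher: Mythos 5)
Your proposal is correct and follows exactly the paper's route: the paper also deduces the corollary immediately from Theorem~\ref{trace_arg} and Corollary~\ref{not_str_trans}. Your careful check of the $g\leftrightarrow g\I$ reindexing is a valid (if implicit in the paper) bookkeeping step and introduces no gap.
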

\begin{proof}
This is immediate from Corollary~\ref{not_str_trans} and Theorem~\ref{trace_arg}.
\end{proof}

\begin{remark}
We could just as well replace $\GL$ with $\SL$ and all the preceding results would follow similarly. In this context we consider not~$D^\times$ but rather the norm-1 group $\SL_1(D)$.
\end{remark}

We now suppose~$K$ is complete with respect to a discrete valuation, and focus on the $\SL_1(D)$ situation. There is a standard \emph{affine} building on which $\SL_d(K)$ acts strongly transitively with respect to~$\Abar$. The spherical Weyl group~$W$ can be thought of in a natural way as a subgroup of the affine Weyl group $W_a$, and if conjugates of $\SL_1(D)$ fail to represent every element of~$W$ then they of course also fail to represent every element of $W_a$. We conclude the following:

\begin{corollary}\label{divalg_never_str_trans}
Let~$F$ be any field,~$D$ a central~$F$-division algebra of degree~$d>2$. Consider $\SL_1(D)\le\SL_d(K)$ for a splitting field~$K|F$. Let~$\Delta$ be the standard spherical building associated to $\SL_d(K)$, and $\Delta_a$ the standard affine building if~$K$ is complete with respect to a discrete valuation. Then the actions of $\SL_1(D)$ on~$\Delta$ and $\Delta_a$ are not weakly transitive, and are thus not strongly transitive with respect to any apartment system.\hfill\qed
\end{corollary}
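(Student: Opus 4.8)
The plan is to reduce the whole statement to the group-theoretic criterion of Corollary~\ref{not_str_trans}: to see that $\SL_1(D)$ fails to act weakly transitively on a building whose ambient group acts strongly transitively with respect to $\Abar$, it suffices to produce a single coset in the relevant $N/T$ that no conjugate of $\SL_1(D)$ meets. So I would exhibit such a missed coset in each of the two buildings separately, reusing the trace obstruction of Theorem~\ref{trace_arg} in both.

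For the spherical building $\Delta$ of $\SL_d(K)$, as the Remark following Corollary~\ref{trace_bldg} indicates, the trace argument goes through verbatim with $\SL_d(K)$ in place of $\GL_d(K)$ and $\SL_1(D)$ in place of $D^\times$: here $N$ is the group of monomial matrices in $\SL_d(K)$, $T$ the diagonal ones, and $W=N/T\cong S_d$. Since $d>2$ I would choose $w\in S_d$ fixing exactly one point, say $d$, and let $X$ be its coset in $N/T$, so every element of $X$ has block form $\diag(B,c)$ with $B$ a $(d-1)\times(d-1)$ monomial matrix having zero diagonal. Given $z\in\SL_1(D)\subseteq D^\times$ and $g$ with $gzg\I=\diag(B,c)\in X$, Proposition~\ref{char_poly_coeffs} still forces the trace $c\in F$, whence $z-c\in D$ is zero or invertible; but $g(z-c)g\I=\diag(B-cI_{d-1},0)$ is singular, so $z=c$, and then $B=cI_{d-1}$ contradicts the zero diagonal of $B$ unless $c=0$, impossible for invertible $z$. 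Thus no conjugate of $\SL_1(D)$ meets $X$, and Corollary~\ref{not_str_trans} gives the claim for $\Delta$.

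For the affine building $\Delta_a$, assuming $K$ complete with respect to a discrete valuation so that $\SL_d(K)$ acts strongly transitively on $\Delta_a$ with respect to $\Abar$, I would write $N_a=\Stab_{\SL_d(K)}(\Sigma_0)$ for the monomial matrices, $T_a=\Fix_{\SL_d(K)}(\Sigma_0)$ for the unit-diagonal matrices, and $W_a=N_a/T_a$ for the affine Weyl group, into which $W\cong S_d$ embeds as the linear part. The image of the same $w$ is a coset $X\subseteq N_a/T_a$, and since $T_a$ consists of diagonal matrices, every representative of $X$ is again a monomial matrix with permutation part $w$, hence again of the form $\diag(B,c)$ with $B$ zero-diagonal. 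The trace computation above never uses that the diagonal entries are units, so it applies unchanged and shows no conjugate of $\SL_1(D)$ meets $X$; Corollary~\ref{not_str_trans} then rules out weak transitivity on $\Delta_a$ as well.

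I expect the only point needing genuine care to be the affine bookkeeping in the last paragraph: correctly identifying $N_a$ and $T_a$ for the standard affine building of $\SL_d(K)$ and confirming that $W\hookrightarrow W_a$ carries $w$ to a coset whose representatives retain the block shape $\diag(B,c)$ with $B$ zero-diagonal. Once that is settled, the obstruction is literally the one from Theorem~\ref{trace_arg}; the core algebra — the trace of an element of $D$ lying in $F$, together with $z-c$ being zero or invertible — is what does all the work, and it is blind to the distinctions between $\GL$ and $\SL$, spherical and affine, or units and non-units among the diagonal entries.
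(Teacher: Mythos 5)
Your proposal is correct and follows essentially the same route as the paper: the spherical case is the trace obstruction of Theorem~\ref{trace_arg} transported verbatim from $\GL_d(K)$, $D^\times$ to $\SL_d(K)$, $\SL_1(D)$, and the affine case is deduced by noting that the affine fixer is contained in the spherical one, so a missed coset of $T$ yields missed cosets of $T_a$ (equivalently, $W\hookrightarrow W_a$ and failing to represent $w\in W$ forces failure in $W_a$). The paper compresses this into a remark plus one paragraph rather than re-running the computation, but the content is identical.
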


\begin{remark}\label{2_case_and_weylT}
In these results it is important that~$d>2$. As seen in \cite{abramenko07}, if~$d=2$ the action \emph{can} in fact be weakly and even strongly transitive. Also, note that $\SL_1(D)$ could still act Weyl transitively on $\Delta_a$, provided that $\SL_1(D)$ is dense in $\SL_d(K)$. We thus have explicit examples of Weyl transitive, not-strongly transitive actions. Equivalently, we have examples of Tits subgroups that do not come from a $BN$-pair; see \cite[Chapter~6]{abramenko08} for the relevant definitions. Lastly, we mention again that there is a short proof of Corollary~\ref{divalg_never_str_trans} due to A.~Rapinchuk \cite{rapinchuk}, in fact in far more generality, and the result in the corollary also follows from an independent argument of K.~Tent \cite{tent}.
\end{remark}

Having shown that the actions fail to be weakly transitive, we now justify the claim that this failure is ``dramatic." Namely, we will exhibit a large collection of cosets~$X$ in~$N/T$ that have empty intersection with any conjugate of~$D^\times$. From now on we equate~$N/T$ with~$S_d$, and may refer to cosets by their cycle decomposition. This next proof is a direct generalization of the ``unique fixed point" situation from Theorem~\ref{trace_arg}.

\begin{theorem}\label{small_cycles}
Let~$X$ be any coset in~$N/T$ whose cycle decomposition features a unique cycle of minimum length that is not a~$d$-cycle, where a 1-cycle represents a fixed point. Let $z\in D^\times$ and $g\in\GL_d(K)$. Then $gzg\I\not\in X$.
\end{theorem}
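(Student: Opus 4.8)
The plan is to mimic the ``unique fixed point'' argument of Theorem~\ref{trace_arg}, but with the trace of $z$ replaced by the trace of a suitable power $z^k$. Suppose toward a contradiction that $gzg\I=A$ for some $z\in D^\times$ and $g\in\GL_d(K)$, where $A\in X$, and let $k$ be the minimum cycle length occurring in the permutation $w$ underlying $X$; by hypothesis this length is attained by a single cycle, and since $w$ is not a $d$-cycle we have $1\le k<d$. After relabeling the coordinates (i.e.\ replacing $g$ by $Pg$ for a permutation matrix $P$ and $X$ by the coset of $PwP\I$, which has the same cycle type), I would arrange that $A=\diag(A_1,\dots,A_m)$ is block diagonal, each $A_j$ a cyclic monomial matrix supported on one cycle of length $\ell_j$, with the unique shortest cycle appearing last, so that $\ell_m=k$ and $\ell_j>k$ for $j<m$. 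Because $w$ is not a $d$-cycle, $m\ge 2$, so at least one block of size exceeding $k$ is present; this is where the hypothesis really enters.

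The crucial observation is that each block satisfies $A_j^{\ell_j}=p_jI_{\ell_j}$, where $p_j\ne 0$ is the product of the nonzero entries around the $j$-th cycle. I would then compute $\mathrm{Tr}(z^k)=\mathrm{Tr}(A^k)=\sum_j\mathrm{Tr}(A_j^k)$. For $j<m$ the matrix $A_j^k$ is monomial with underlying permutation $\sigma_j^k$, and since $0<k<\ell_j$ the $\ell_j$-cycle $\sigma_j$ has no fixed point under its $k$-th power, so $A_j^k$ has zero diagonal and $\mathrm{Tr}(A_j^k)=0$. For $j=m$ we get $A_m^k=A_m^{\ell_m}=p_mI_k$, contributing $k\,p_m$. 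Hence $\mathrm{Tr}(z^k)=k\,p_m$. As $z^k\in D$, Proposition~\ref{char_poly_coeffs} gives $\mathrm{Tr}(z^k)\in F$, so $k\,p_m\in F$ and therefore $p_m\in F$. The uniqueness of the shortest cycle is used precisely here: any other cycle whose length divides $k$ would also contribute to the trace, and minimality would force such a cycle to have length exactly $k$, which uniqueness forbids.

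With $p_m\in F$ the argument closes exactly as in Theorem~\ref{trace_arg}. Since $z^k\in D$ and $p_m\in F\subseteq D$, the element $z^k-p_m$ lies in $D$ and so is either zero or invertible. Its image $A^k-p_mI_d$ is block diagonal with last block $A_m^k-p_mI_k=0$, so its last $k$ rows vanish and it is not invertible; hence $z^k-p_m=0$, i.e.\ $A^k=p_mI_d$. But then $A_j^k=p_mI_{\ell_j}$ for the block $j<m$, which is impossible: a cyclic monomial matrix of size $\ell_j$ has minimal polynomial of degree $\ell_j>k$, since $e,A_je,\dots,A_j^{\ell_j-1}e$ are nonzero scalar multiples of distinct standard basis vectors and hence linearly independent for a basis vector $e$ in its support. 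Thus $A_j$ cannot satisfy the degree-$k$ polynomial $t^k-p_m$, and this contradiction shows no such $z,g$ exist.

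The main obstacle I anticipate is the step extracting $p_m\in F$ from $\mathrm{Tr}(z^k)=k\,p_m$: this is immediate when $\mathrm{char}(F)=0$ or $\mathrm{char}(F)\nmid k$, but when $\mathrm{char}(F)\mid k$ the integer $k$ vanishes in $F$ and the trace computation becomes vacuous. In that case I would instead isolate $p_m$ through the $F$-rational structure of $\chi_{z^k}(t)\in F[t]$---for example via its factorization as a power of an irreducible polynomial over $F$, using that $p_m$ is a root contributed with high multiplicity by the unique shortest block---or reduce to the separable case. Verifying the trace bookkeeping in the second paragraph and handling this characteristic subtlety are the only genuinely delicate points; the remainder is a direct generalization of the $k=1$ situation already treated in Theorem~\ref{trace_arg}.
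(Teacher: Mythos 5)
Your overall strategy coincides with the paper's: reduce to a block-diagonal monomial matrix $A=\diag(A_1,\dots,A_m)$ with the unique shortest cycle as the last block, use the $F$-rationality guaranteed by Proposition~\ref{char_poly_coeffs} to show that the product $p_m$ of the entries around that shortest cycle lies in $F$, and then run the ``zero or invertible'' dichotomy on $z^k-p_m\in D$ exactly as in Theorem~\ref{trace_arg}. The reduction and the concluding dichotomy (non-invertibility of $A^k-p_mI_d$ because the last block vanishes, impossibility of $A_j^k=p_mI_{\ell_j}$ for $j<m$ because the minimal polynomial of a cyclic monomial block of size $\ell_j$ has degree $\ell_j>k$) are carried out correctly and match the published argument.

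The one genuine gap is the one you flag yourself: you extract $p_m\in F$ from $\mathrm{Tr}(z^k)=k\,p_m$, which fails whenever $\mathrm{char}(F)$ divides $k$ (for instance $\mathrm{char}(F)=2$ with the unique shortest cycle a $2$-cycle), since the identity then reads $0=0$. The theorem is stated for an arbitrary field $F$, so this case must be handled, and your proposed fallback via the factorization of $\chi_{z^k}$ is too vague to count as a proof. The paper closes this case cleanly by working with $\chi_z(t)=\prod_{j}(t^{\ell_j}-p_j)$ itself rather than with a trace: because every $\ell_j$ with $j<m$ strictly exceeds $k=\ell_m$, no nonempty subcollection of $\{\ell_1,\dots,\ell_{m-1}\}$ has degrees summing to $k$, so the coefficient of $t^{k}$ in $\chi_z(t)$ is $\pm\, p_1\cdots p_{m-1}$, while the constant term is $\pm\, p_1\cdots p_{m-1}p_m$; both lie in $F$, the former is nonzero since $A$ is invertible, and hence $p_m\in F$ with no division by the integer $k$. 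Substituting this coefficient comparison for your trace computation makes your argument complete in all characteristics.
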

\begin{proof}
Suppose $gzg\I=A\in X$ for some~$g$,~$z$. By adjusting~$g$ as necessary we may assume~$A$ is of the form $A=\diag(B_1,\dots,B_r,C)$ where
\[
 B_i=\begin{pmatrix}
  0 & _1b_i & 0 & \cdots & 0 \\
  0 & 0 & _2b_i & \cdots & 0 \\
  \vdots  & \vdots  & \ddots & \ddots & \vdots \\
  0 & 0 & 0 & \ddots & _{k_i-1}b_i \\
  _{k_i}b_i & 0 & 0 & \cdots & 0
 \end{pmatrix} \textnormal{ and  }
 C=\begin{pmatrix}
  0 & c_1 & 0 & \cdots & 0 \\
  0 & 0 & c_2 & \cdots & 0 \\
  \vdots  & \vdots  & \ddots & \ddots & \vdots \\
  0 & 0 & 0 & \ddots & c_{\ell-1} \\
  c_{\ell} & 0 & 0 & \cdots & 0
 \end{pmatrix}
\]
with $\ell<k_i$ for all $1\le i\le r$. Let $b_i \defeq {_1b_i}\cdots{_{k_i}b_i}$ for each~$i$ and $c \defeq c_1\cdots c_{\ell}$.

Consider the characteristic polynomial $\chi_A(t)=\chi_{B_1}(t)\cdots\chi_{B_r}(t)\chi_C(t)$. We know $\chi_A(t)$ equals $\chi_z(t)$, so by Proposition~\ref{char_poly_coeffs} its coefficients lie in~$F$. The constant term is $\pm b_1\cdots b_rc$, so we see that $b_1\cdots b_rc\in F$. The~$t^\ell$ term is also of interest. Since $\chi_{B_i}(t)=t^{k_i}-b_i$ and $\chi_C(t)=t^{\ell}-c$, and since $\ell<k_i$ for all~$i$, we see that the~$t^\ell$ term of $\chi_A(t)$ must be $\pm b_1\cdots b_rt^{\ell}$. Thus, $b_1\cdots b_r\in F$, and we conclude that $c\in F$.

In particular $z^{\ell}-c\in D$, so $A^{\ell}-cI_d$ is either invertible or is the zero matrix. Since $C^{\ell}=cI_{\ell}$, it is impossible that $A^{\ell}-cI_d$ can be invertible. But $\ell<k_i$ for all~$i$, so the $B_i^{\ell}-cI_{k_i}$ are all nonzero. We conclude that in fact $gzg\I\not\in X$ for any~$g$,~$z$.
\end{proof}

Theorem~\ref{small_cycles} discounts any coset featuring a ``unique smallest" cycle, and we can also discount cosets featuring a ``big" cycle, as the next theorem will show. First we need a simple lemma.

\begin{lemma}\label{min_poly_ncycle}
The matrix
\[
 A=\begin{pmatrix}
  0 & a_1 & 0 & \cdots & 0 \\
  0 & 0 & a_2 & \cdots & 0 \\
  \vdots  & \vdots  & \ddots & \ddots & \vdots \\
  0 & 0 & 0 & \ddots & a_{d-1} \\
  a_d & 0 & 0 & \cdots & 0
 \end{pmatrix}
\]
has minimal polynomial $t^d-a$ where $0\neq a \defeq a_1\cdots a_d$.
\end{lemma}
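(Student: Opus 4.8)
The plan is to recognize $A$ as a weighted cyclic shift and compute its action on the standard basis $e_1,\dots,e_d$ directly. Reading off the columns, one finds $A e_1 = a_d e_d$ and $A e_j = a_{j-1} e_{j-1}$ for $2 \le j \le d$; that is, $A$ sends each basis vector to a nonzero scalar multiple of its cyclic predecessor, the scalars being the $a_i$. Note that the hypothesis $a = a_1\cdots a_d \neq 0$ forces every individual weight $a_i$ to be nonzero, a fact I will use at the end.

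First I would iterate this action starting from $e_1$. Each application of $A$ multiplies in one more weight and steps one index backward cyclically, so $A^k e_1 = (a_d a_{d-1} \cdots a_{d-k+1})\, e_{d-k+1}$ for $1 \le k \le d-1$, and after $d$ steps we return to $e_1$ having accumulated the full product of weights: $A^d e_1 = a\, e_1$. The identical computation beginning at any $e_j$ gives $A^d e_j = a\, e_j$, so $A^d = a I$. Hence $A$ satisfies $t^d - a$, and therefore its minimal polynomial divides $t^d - a$.

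It remains to show the minimal polynomial has degree exactly $d$, for which I would exhibit $e_1$ as a cyclic vector. By the computation above, the vectors $e_1, A e_1, \dots, A^{d-1} e_1$ are, up to the nonzero scalars just found, precisely $e_1, e_d, e_{d-1}, \dots, e_2$, hence linearly independent. Consequently no nonzero polynomial of degree less than $d$ can annihilate $A$: applying such a polynomial to $e_1$ would produce a nontrivial linear relation among these independent vectors. Combined with $A^d = a I$, this forces the minimal polynomial to be exactly $t^d - a$.

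I do not anticipate any genuine obstacle here. The only point requiring care is the bookkeeping of the cyclic indexing in the evaluation of $A^k e_1$, and the observation that $a \neq 0$ is exactly what guarantees each weight $a_i$ is nonzero, so that the images $A^k e_1$ remain honest scalar multiples of distinct basis vectors and the linear-independence step goes through.
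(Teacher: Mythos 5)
Your proof is correct and takes essentially the same route as the paper: verify that $A$ satisfies $t^d-a$, then rule out any annihilating polynomial of smaller degree via a linear-independence argument. Your explicit cyclic-vector computation showing $e_1, Ae_1,\dots,A^{d-1}e_1$ are independent is just a concrete version of the paper's ``quick calculation'' that $I_d, A,\dots,A^{d-1}$ are linearly independent.
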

\begin{proof}
 $A$ satisfies this polynomial. Also, a quick calculation shows that $I_d, A,A^2,\dots,A^{d-1}$ are linearly independent, so the minimal polynomial of~$A$ cannot have degree less than~$d$. We conclude that $t^d-a$ is the minimal polynomial.
\end{proof}

\begin{theorem}\label{big_cycles}
Let~$X$ be any coset in~$N/T$ whose cycle decomposition features a~$k$-cycle, with $d/2<k<d$. Let $z\in D^\times$, $g\in\GL_d(K)$. Then $gzg\I\not\in X$.
\end{theorem}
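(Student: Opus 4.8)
The plan is to follow the skeleton of the proof of Theorem~\ref{small_cycles}, but now to isolate the constant attached to the \emph{big} cycle in a single well-chosen coefficient of the characteristic polynomial, and then to combine this with a degree bound coming from the minimal polynomial. First I would put $A = gzg\I$ into block-diagonal normal form exactly as in Theorem~\ref{small_cycles}: adjusting $g$ by a permutation matrix replaces $g$ by $pg$ and so keeps $A$ conjugate to $z$, and lets me assume $A=\diag(B_1,\dots,B_r,C)$, where $C$ is the $k\times k$ cyclic block of the distinguished $k$-cycle and the $B_i$ are the cyclic blocks of the remaining cycles, of sizes $k_i$ with $\sum_i k_i = d-k$. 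Because $k>d/2$ we have $d-k<k$, so each $k_i<k$ and the $k$-cycle is the unique cycle of its length; this is what makes singling out $C$ legitimate. By Lemma~\ref{min_poly_ncycle} the characteristic (and minimal) polynomial of $C$ is $t^k-c$ with $0\neq c=c_1\cdots c_k$, while $\chi_{B_i}(t)=t^{k_i}-b_i$.

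The key step is to show $c\in F$. Writing $g(t)\defeq\prod_i (t^{k_i}-b_i)$, a monic polynomial of degree $d-k$, I have $\chi_z(t)=\chi_A(t)=(t^k-c)\,g(t)=t^k g(t)-c\,g(t)$. Here $t^k g(t)$ involves only powers $t^j$ with $j\ge k$, whereas $-c\,g(t)$ involves only powers with $j\le d-k$. Since $d-k<k$, these two ranges do not overlap at degree $d-k$, so the coefficient of $t^{d-k}$ in $\chi_z$ comes solely from $-c\,g(t)$ and equals $-c$ (as $g$ is monic of degree $d-k$). By Proposition~\ref{char_poly_coeffs} the coefficients of $\chi_z$ lie in $F$, whence $c\in F$. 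This isolation of $-c$ in a single coefficient is the crux, and the hypothesis $k>d/2$ is precisely what guarantees it; note the contrast with Theorem~\ref{small_cycles}, where it was the \emph{smallest} cycle's constant that was pinned down.

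Finally I would derive a contradiction by a degree count. Since $z\in D$, its minimal polynomial $m_z$ over $F$ is irreducible of degree $e\defeq[F(z):F]$, and $e$ divides $d$ by Proposition~\ref{div_alg_facts}. The minimal polynomial $t^k-c$ of the block $C$ divides the minimal polynomial of $A$ over $K$, which in turn divides $m_z$ (as $m_z(A)=0$); hence $k\le e$. Combined with $k>d/2$ and $e\mid d$, this forces $e=d$, so $m_z=\chi_z$ is irreducible of degree $d$ and every eigenvalue $\lambda$ of $z$ has $[F(\lambda):F]=d$. But an eigenvalue $\lambda$ of $C$ satisfies $\lambda^k=c$, and with $c\in F$ and $k<d$ this gives $[F(\lambda):F]\le k<d$, a contradiction; hence $gzg\I\not\in X$. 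I expect the only delicate points to be the justification of the block normal form (identical to the reduction already used in Theorem~\ref{small_cycles}) and the routine bookkeeping that $t^k-c$ genuinely divides $m_z$ over $K$.
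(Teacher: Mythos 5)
Your proof is correct and follows the paper's argument almost exactly: the same block normal form, the same isolation of the coefficient of $t^{d-k}$ (the paper's $t^j$ with $j=d-k$) to conclude $c\in F$, and the same use of $e\mid d$ together with $e\ge k>d/2$ to force the minimal polynomial of $z$ to be all of $\chi_z$. The only divergence is the final contradiction: the paper invokes the dichotomy that $z^k-c\in D$ is zero or invertible, rules out invertibility via the block $C$, and then contradicts the minimality of the degree-$d$ polynomial, whereas you contradict irreducibility by noting an eigenvalue of $C$ generates an extension of degree at most $k<d$; both finishes are valid and of comparable length.
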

\begin{proof}
Suppose $gzg\I=A\in X$ for some~$g$,~$z$. By adjusting~$g$ as necessary we may assume~$A$ is of the form $A=\diag(A',B)$ where
\[
 A'=\begin{pmatrix}
  0 & a_1 & 0 & \cdots & 0\\
  0 & 0 & a_2 & \cdots & 0\\
  \vdots  & \vdots  & \ddots & \ddots & \vdots\\
  0 & 0 & 0 & \ddots & a_{k-1}\\
  a_k & 0 & 0 & \cdots & 0
 \end{pmatrix}
\]
and $B$ is a $d-k$ by $d-k$ matrix. Set $a \defeq a_1\cdots a_k$ and $j \defeq d-k$, so $0<j<k$. The characteristic polynomial $\chi_A(t)$ equals $(t^k-a)\chi_B(t)$, and $\chi_B(t)$ has degree~$j$. As before, the coefficients of $\chi_A(t)$ lie in~$F$. In particular the coefficient on the $t^j$ term is in~$F$. But this term must be $-at^j$, since $k>j$ and $\chi_B(t)$ is monic of degree~$j$. Thus, $a\in F$. Also note that the minimal polynomial of~$z$ must have degree dividing~$d$, by Proposition~\ref{div_alg_facts}, but by Lemma~\ref{min_poly_ncycle} it also must have degree at least~$k$. Since $k>d/2$ we conclude that $\chi_A(t)$ is the minimal polynomial of~$z$.

Now, since $a\in F$, $z^k-a\in D$ and $g(z^k-a)g\I=A^k-aI_d$ is either zero or invertible. It cannot be invertible, so we conclude $z^k-a=0$. But $\chi_A(t)$ is the minimal polynomial of~$z$ and $k<d$ so this is impossible.
\end{proof}

We can generalize these two situations simultaneously with another criterion we call ``lonely cycles." Let~$\sigma$ be a permutation with cycle decomposition $\sigma=\sigma_1\cdots\sigma_m$ for each~$\sigma_i$ a~$k_i$-cycle (as usual we account for fixed points with ``1-cycles"). We will call~$\sigma_i$ \emph{lonely} if~$k_i$ satisfies the following two conditions:
\begin{enumerate}
 \item For any $\epsilon_1,\dots,\epsilon_r\in\{0,1\}$, if $\displaystyle k_i=\sum_{j=1}^r\epsilon_jk_j$ then $\epsilon_j=0$ for all $j\neq i$.
 \item If~$k_i$ is maximal among all~$k_j$ then~$k_i$ does not divide~$d$.
\end{enumerate}

For example the $3$-cycle $(1~2~3)$ is lonely in the permutation given by $(1~2~3)(4~5)(6~7)$ since~$3$ cannot be written as a sum involving~$2$ and~$2$, and~$3$ does not divide~$7$, but the~$2$-cycle~$(4~5)$ is \emph{not} lonely since~$2$ \emph{can} be written as a sum involving~$3$ and~$2$, namely~$2=2$. Note that the second condition in particular ensures that~$d$-cycles themselves are not lonely. However, the~$d$-cycles are still an interesting case, which we will consider later. For now we claim that any permutation featuring a lonely cycle cannot be represented by~$D^\times$.

\begin{theorem}\label{lonely_cycles}
Let~$X$ be any coset in~$N/T$ whose cycle decomposition features a lonely~$k$-cycle. Let $z\in D^\times$, $g\in\GL_d(K)$. Then $gzg\I\not\in X$.
\end{theorem}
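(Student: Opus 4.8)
The plan is to follow the template of Theorems~\ref{small_cycles} and~\ref{big_cycles}, which this result subsumes, but to isolate the product of the lonely block using the combinatorial condition~(1) and to derive the final contradiction from the field-degree divisibility in Proposition~\ref{div_alg_facts} together with condition~(2). First I would suppose $gzg\I = A \in X$ and, after adjusting $g$ as necessary, arrange $A = \diag(A_1, \dots, A_m)$ in block form, where each $A_j$ is the cyclic block of size $k_j$ corresponding to $\sigma_j$ (as in the previous two proofs). Writing $a_j$ for the product of the entries of $A_j$, every $a_j \neq 0$ since $z$, and hence $A$, is invertible. Each block $A_j$ has characteristic polynomial $t^{k_j} - a_j$, and by Lemma~\ref{min_poly_ncycle} this is also its minimal polynomial; thus $\chi_A(t) = \prod_{j=1}^m (t^{k_j} - a_j)$, which lies in $F[t]$ by Proposition~\ref{char_poly_coeffs}.

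The crucial step, and the one I expect to be the main obstacle, is extracting $a_i \in F$ for the lonely $k_i$-cycle $\sigma_i$. Expanding $\prod_j (t^{k_j} - a_j) = \sum_{S} t^{\sum_{j\in S} k_j} \prod_{j \notin S} (-a_j)$ over subsets $S \subseteq \{1,\dots,m\}$, the coefficient of $t^{k_i}$ collects exactly those $S$ with $\sum_{j\in S} k_j = k_i$. Condition~(1) of loneliness says the only such $S$ is $\{i\}$, so this coefficient is, up to sign, $\prod_{j\neq i} a_j$, whence $\prod_{j\neq i} a_j \in F$; the constant term gives $\prod_j a_j \in F$, and dividing (all $a_j \neq 0$) yields $a_i \in F$. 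The care needed here is precisely in checking that no other subset contributes to the $t^{k_i}$ coefficient, which is the content of condition~(1); this is also what makes the argument a genuine common generalization of the ``unique smallest'' and ``big cycle'' cases.

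Once $a_i \in F$, the argument closes in the familiar way. Since $z^{k_i} - a_i \in D$, the matrix $A^{k_i} - a_i I_d$ is either zero or invertible; its $i$-th block is $A_i^{k_i} - a_i I_{k_i} = 0$ by Lemma~\ref{min_poly_ncycle}, so it is not invertible and must vanish, giving $z^{k_i} = a_i$. Hence the minimal polynomial $\mu_z$ divides $t^{k_i} - a_i$, so $\deg \mu_z \le k_i$. On the other hand $t^{k_j} - a_j = \mu_{A_j}$ divides $\mu_z$ for every $j$, which forces $k_j \le k_i$ for all $j$ (so $k_i$ is in fact maximal among the $k_j$) and, taking $j = i$, forces $\mu_z = t^{k_i} - a_i$.

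Finally I would invoke Proposition~\ref{div_alg_facts}: the subfield $F(z)$ satisfies $[F(z):F] = \deg \mu_z = k_i$, which therefore divides $d$. But $k_i$ is maximal, so condition~(2) of loneliness applies and gives $k_i \nmid d$, a contradiction. We conclude that $gzg\I \not\in X$ for any $g$, $z$.
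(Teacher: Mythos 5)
Your proof is correct and follows essentially the same route as the paper's: block-decompose $A$, use loneliness condition (1) to read off the product of the non-lonely block constants from the $t^{k_i}$ coefficient, combine with the constant term to get $a_i\in F$, deduce $z^{k_i}=a_i$ from the zero-or-invertible dichotomy, establish maximality of $k_i$, and contradict Proposition~\ref{div_alg_facts} via condition (2). The only cosmetic difference is that you get maximality of $k_i$ from degree comparison of the blocks' minimal polynomials, where the paper notes that each $k_j$ divides $k_i$; both are fine.
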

\begin{proof}
Suppose $gzg\I=A\in X$ for some~$g$,~$z$. Let the cycle type of~$X$ be $j_1,\dots,j_r,k$, so no collection of distinct~$j_i$ can sum to~$k$. By adjusting~$g$ as necessary we may assume~$A$ is of the form $A=\diag(B_1,\dots,B_r,C)$ where
\[
 B_i=\begin{pmatrix}
  0 & _ib_1 & 0 & \cdots & 0\\
  0 & 0 & _ib_2 & \cdots & 0\\
  \vdots  & \vdots  & \ddots & \ddots & \vdots\\
  0 & 0 & 0 & \ddots & _ib_{j_i-1}\\
  _ib_{j_i} & 0 & 0 & \cdots & 0
 \end{pmatrix}
\]
and \[
 C=\begin{pmatrix}
  0 & c_1 & 0 & \cdots & 0\\
  0 & 0 & c_2 & \cdots & 0\\
  \vdots  & \vdots  & \ddots & \ddots & \vdots\\
  0 & 0 & 0 & \ddots & c_{k-1}\\
  c_k & 0 & 0 & \cdots & 0
 \end{pmatrix}.
\]
Set $b_i \defeq {_ib_1}\cdots{_ib_{j_i}}$ for each~$i$ and set $c \defeq c_1\cdots c_k$. The characteristic polynomial $\chi_A(t)$ equals $\chi_{B_1}(t)\cdots \chi_{B_r}(t)\chi_C(t)$. The coefficients of $\chi_A(t)$ lie in~$F$, so in particular the coefficient on the~$t^k$ term is in~$F$. But this term must be $\pm(b_1\cdots b_r)t^k$, since $\chi_{B_i}(t)=t^{j_i}-b_i$, $\chi_C(t)=t^k-c$, and no collection of~$j_i$ can sum to~$k$. Thus, $b_1\cdots b_r\in F$. The constant term of $\chi_A(t)$ is $\pm b_1\cdots b_rc$, so we also know that $c\in F$.

In particular, $z^k-c\in D$ and $g(z^k-c)g\I=A^k-cI_d$ is either zero or invertible. It cannot be invertible, so $z^k-c=0$. This implies that each~$j_i$ divides~$k$, and so~$k$ is maximal among $j_1,\dots,j_r,k$. Then by the second criterion in the definition of lonely cycles,~$k$ does not divide~$d$. However, by the proof of Lemma~\ref{min_poly_ncycle} the minimal polynomial of~$z$ cannot have degree smaller than~$k$, so in fact $z^k-c$ is the minimal polynomial of~$z$. Since the degree of the subfield $F(z)$ in~$D$ must divide~$d$ by Proposition~\ref{div_alg_facts}, this is a contradiction.
\end{proof}

As an example, consider the permutation $\sigma=(1~2~3)(4~5)(6~7)$ from earlier. Here $(1~2~3)$ is lonely, so no conjugate of~$D^\times$ can represent~$\sigma$. Note that~$\sigma$ does not feature a ``big" cycle, nor a ``unique smallest" cycle, so we have really gained ground by considering lonely cycles. Also note that any big or unique smallest cycle is clearly lonely, so this is really a generalization.

All these results of course still hold for $\SL_1(D)$. We now inspect this case further and show that there may be even more cosets~$X$ in~$N/T$ that fail to be represented by any conjugate of $\SL_1(D)$, namely the~$d$-cycles.

\begin{theorem}\label{min_poly_thm}
Suppose~$d$ is not a power of 2. For any $z\in\SL_1(D)$, $g\in\GL_d(K)$, $gzg\I$ cannot be of the form
\[
 A=\begin{pmatrix}
  0 & a_1 & 0 & \cdots & 0 \\
  0 & 0 & a_2 & \cdots & 0 \\
  \vdots  & \vdots  & \ddots & \ddots & \vdots \\
  0 & 0 & 0 & \ddots & a_{d-1} \\
  a_d & 0 & 0 & \cdots & 0
 \end{pmatrix}
\]
\end{theorem}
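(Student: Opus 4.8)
The plan is to exploit the extra constraint supplied by $\SL_1(D)$---namely that the image matrix has determinant $1$---together with an irreducibility argument, since the bare trace-and-invertibility technique of the earlier theorems no longer applies to a $d$-cycle (here $k=d$ divides $d$, so the second lonely-cycle condition fails precisely on these cosets).

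First I would suppose, toward a contradiction, that $gzg\I=A$ with $A$ the displayed $d$-cycle matrix, and set $a\defeq a_1\cdots a_d$. By Lemma~\ref{min_poly_ncycle} the minimal polynomial of $A$ is $t^d-a$; since $A$ is $d$-by-$d$ this is also its characteristic polynomial $\chi_A(t)$. Because $\chi_A(t)=\chi_z(t)$ has coefficients in~$F$ by Proposition~\ref{char_poly_coeffs}, we get $a\in F$. Consequently $F(z)$ is a subfield of~$D$, and since $D$ is a division algebra this commutative $F$-subalgebra is a field, so the minimal polynomial $t^d-a$ of~$z$ must be irreducible over~$F$.

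Next I would bring in the norm-$1$ condition. Since $z\in\SL_1(D)\le\SL_d(K)$, its image has determinant~$1$, and conjugation preserves the determinant, so $\det A=1$. A direct expansion of the $d$-cycle matrix gives $\det A=(-1)^{d-1}a$, whence $a=(-1)^{d-1}$. Thus the minimal polynomial of~$z$ is $t^d-1$ when $d$ is odd and $t^d+1$ when $d$ is even.

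The decisive step is then a reducibility contradiction. When $d$ is not a power of~$2$, the polynomial $t^d-(-1)^{d-1}$ factors already over the prime field, hence over~$F$: if $d$ is odd then $t^d-1=(t-1)(t^{d-1}+\cdots+1)$, while if $d$ is even, writing $d=2^m q$ with $q\ge 3$ odd and $s\defeq 2^m$, the identity $x^q+1=(x+1)(x^{q-1}-\cdots+1)$ applied to $x=t^s$ yields the proper factorization $t^d+1=(t^s+1)(\cdots)$. Either way $t^d-a$ is reducible over~$F$, contradicting the irreducibility established above, so no such $A$ can arise. I expect the only real subtlety to lie in this final case analysis: verifying that the hypothesis ``$d$ not a power of~$2$'' is exactly what forces a factorization valid over an \emph{arbitrary} field~$F$, so that one cannot rescue irreducibility via cyclotomic polynomials---which indeed remain irreducible over~$\Q$ precisely when $d$ is a power of~$2$, explaining why that case must be excluded.
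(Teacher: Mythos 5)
Your proposal is correct and follows essentially the same route as the paper's proof: use Lemma~\ref{min_poly_ncycle} to identify the minimal polynomial as $t^d-a$, use the norm-$1$ condition to pin down $a=(-1)^{d-1}$, invoke irreducibility of the minimal polynomial of an element of a division algebra, and derive a contradiction from the factorization of $t^d-1$ (for $d$ odd) or $t^{2^e}+1 \mid t^d+1$ (for $d=2^e m$ with $m>1$ odd). The only difference is cosmetic: you make explicit the steps (char.\ poly.\ $=$ min.\ poly., coefficients in $F$, $\det A=(-1)^{d-1}a$) that the paper leaves implicit.
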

\begin{proof}
Suppose $gzg\I=A$. By Lemma~\ref{min_poly_ncycle},~$z$ has minimal polynomial $t^d-a$ where $a=a_1\cdots a_d$. Since~$z$ has norm~$1$, $a=(-1)^{d+1}$ so~$z$ has minimal polynomial $t^d+(-1)^d$. Since~$D$ is a division algebra and $z\in D$, $t^d+(-1)^d$ is irreducible.

This is impossible if~$d$ is odd, since~$1$ is a root of $t^d-1$, so assume~$d$ is even. Let $d=2^em$ for odd~$m$. Then $t^{2^e}+1$ divides $t^d+1$, so by irreducibility we know that~$d$ must be a power of~$2$. But we discounted this possibility in the hypothesis.
\end{proof}

In general, we can justify calling the failure to act weakly transitively ``dramatic." For sufficiently large~$d$, it can be shown that over~$70\%$ of the permutations in~$S_d$ feature a ``big" cycle, and thus cannot be represented by any conjugate of~$D^\times$. We note that accounting for the~$d$-cycles and the unique smallest cycles does not improve this estimate, and at present it is not clear whether this estimate would change by accounting for permutations featuring a lonely cycle.

\section{Division algebras over~$\Q$}\label{sec:div_algs_Q}
In this section we prove some further results in the case that~$F$ is either~$\Q$ or another global field with certain properties. Specifically we will use a very different method to prove a version of Theorem~\ref{min_poly_thm} even when~$d$ is a (proper) power of~$2$. We also achieve an interesting result regarding cyclotomic subfields of division algebras.

We collect some facts about central simple algebras over~$\Q$ and $\Q_p$ for prime~$p$. For details on the Brauer group of local and global fields, and the Hasse invariant maps, see \cite[Section~31]{lorenz08}, \cite[Chapter~12]{serre79}. One thing to note is that the invariant maps take values in~$\Q/\Z$, but we will simply write their outputs as fractions in $[0,1)$.

\begin{proposition}\label{loc_glob}
Let~$A$ be a central simple algebra over~$\Q$. Let $A_p \defeq A\otimes_{\Q}\Q_p$ for each~$p$ prime or infinity, where $A_{\infty} \defeq A\otimes_{\Q}\R$. The following properties hold:

\noindent 1. For all but finitely many~$p$, $\inv_p(A)=0$.

\noindent 2. The sum of all the $\inv_p(A)$ is zero.

\noindent 3. The Schur index of~$A$ equals the \emph{exponent} of~$A$, i.e., the least common multiple \indent of the (reduced) denominators of the non-zero $\inv_p(A)$.

\noindent 4. If~$A_p$ splits over a finite extension $F|\Q_p$, then the index of~$A_p$ divides $[F \colon \Q_p]$.
\end{proposition}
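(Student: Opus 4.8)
All four assertions are standard facts from the class field theory of central simple algebras over local and global fields, so my plan is not to reprove them from scratch but to isolate the precise input behind each, deferring to \cite[Section~31]{lorenz08} and \cite[Chapter~12]{serre79} for complete proofs. The one genuinely deep ingredient—and the step I expect to be the main obstacle—is the global reciprocity law underlying part~2; once the local theory and that reciprocity statement are granted, the remaining parts are comparatively formal.

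For part~1 I would use that $A$ is split by some finite Galois extension $L|\Q$. Such an $L$ is ramified at only finitely many primes, and for any $p$ unramified in $L$ a completion of $L$ at a prime above $p$ is an unramified splitting field of $A_p$. Since the invariant map $\inv_p\colon\Br(\Q_p)\to\Q/\Z$ annihilates the unramified part of $\Br(\Q_p)$, we get $\inv_p(A)=0$ for all but finitely many $p$.

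Parts~2 and~3 both rest on the fundamental exact sequence
\[
0\to\Br(\Q)\to\bigoplus_v\Br(\Q_v)\xrightarrow{\;\sum_v\inv_v\;}\Q/\Z\to 0
\]
of global class field theory. Part~2 is precisely the statement that the class $[A]$ lies in the kernel of $\sum_v\inv_v$, i.e.\ that the composite $\Br(\Q)\to\bigoplus_v\Br(\Q_v)\to\Q/\Z$ is zero; this encodes Hilbert reciprocity and is the assertion I would simply quote rather than derive. For part~3 I would argue in two steps. Locally, $\inv_p$ is an isomorphism $\Br(\Q_p)\cong\Q/\Z$ under which the index and the exponent of $A_p$ both equal the reduced denominator of $\inv_p(A)$; this is pure local theory. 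Globally, the injectivity of $\Br(\Q)\to\bigoplus_v\Br(\Q_v)$ from the sequence above shows that the exponent of $A$—the order of $[A]$—is the least common multiple of the local exponents, hence the least common multiple of the reduced denominators of the nonzero $\inv_p(A)$. That the Schur index coincides with the exponent over a global field is the Brauer--Hasse--Noether theorem, which I would cite.

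Part~4 is formal and independent of the number-theoretic input. I would invoke the standard divisibility $\operatorname{ind}(A_p)\mid[F\colon\Q_p]\cdot\operatorname{ind}(A_p\otimes_{\Q_p}F)$, valid for every finite extension $F|\Q_p$. When $F$ splits $A_p$ the factor $\operatorname{ind}(A_p\otimes_{\Q_p}F)$ equals $1$, and therefore $\operatorname{ind}(A_p)\mid[F\colon\Q_p]$, as claimed.
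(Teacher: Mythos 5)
Your treatments of parts 2, 3 and 4 are sound and essentially flesh out the citations the paper itself relies on (the paper disposes of the whole proposition by quoting \cite[Chapter~12]{serre79} for parts 1 and 2, the Albert--Brauer--Hasse--Noether theorem for part 3, and \cite[Section~13.3, Corollary~1]{serre79} for part 4). The problem is your part 1. The step ``a completion of $L$ at a prime above $p$ is an unramified splitting field of $A_p$, and $\inv_p$ annihilates the unramified part of $\Br(\Q_p)$, hence $\inv_p(A)=0$'' does not work: the classes in $\Br(\Q_p)$ split by unramified extensions exhaust \emph{all} of $\Br(\Q_p)$, since $\Br(\Q_p^{\mathrm{nr}}|\Q_p)=\Br(\Q_p)$, and $\inv_p$ is injective. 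Concretely, if $L_w|\Q_p$ is unramified of degree $n$ with Frobenius $\sigma$, the cyclic algebra $(L_w/\Q_p,\sigma,p)$ is split by the unramified field $L_w$ yet has invariant $1/n\neq 0$. So being split by an unramified extension of degree $n$ only places $\inv_p(A)$ in $\tfrac 1n\Z/\Z$; it does not force it to vanish, and your argument establishes nothing at the primes in question.

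The missing ingredient is an integrality condition, not merely an unramifiedness condition. One correct route: represent $[A]$ by a $2$-cocycle of $\Gal(L|\Q)$ with values in $L^\times$; the finitely many cocycle values are units outside a finite set $S$ of primes, so for $p\notin S$ at which $L|\Q$ is also unramified, the localized class lies in the image of $H^2(\Gal(L_w|\Q_p),\mathcal{O}_{L_w}^\times)$, which vanishes for unramified $L_w|\Q_p$ because the unit group is then cohomologically trivial; hence $\inv_p(A)=0$. Equivalently, choose a $\Z$-order in $A$ and observe that it is Azumaya over $\Z_p$ for almost all $p$, while $\Br(\Z_p)=0$. Either repair, or simply citing \cite[Chapter~12]{serre79} as the paper does, closes the gap; the rest of your write-up can stand as is.
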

\begin{proof}
Parts 1 and 2 are in the definition of $\Br(\Q)$ \cite[Chapter~12]{serre79}. Part 3 follows from the well-known Albert-Brauer-Hasse-Noether theorem. Part 4 follows from \cite[Section~13.3,~Corollary~1]{serre79}.
\end{proof}

There is another result that will be useful in the case when the degree of~$D$ is a power of a prime.

\begin{lemma}\label{pm_pow}
Suppose~$d>2$ is a power of a prime, $d=s^m$. Then there exists an odd prime~$p$ such that~$D_p$ is a division algebra.
\end{lemma}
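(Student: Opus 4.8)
The plan is to convert the statement entirely into the arithmetic of local invariants, using Proposition~\ref{loc_glob}. Since $D$ is a division algebra of degree $d$, its Schur index is exactly $d = s^m$. By part~3 of Proposition~\ref{loc_glob} the index equals the exponent, which is the least common multiple of the reduced denominators of the nonzero $\inv_v(D)$ as $v$ ranges over the places of $\Q$. Because $D_v$ is a division algebra precisely when its local index equals its degree $d$, and over a local field the index of $A_v$ equals the order of $\inv_v(A)$ in $\Q/\Z$ (i.e.\ the reduced denominator of $\inv_v(A)$), the theorem reduces to finding an \emph{odd} prime $p$ for which $\inv_p(D)$ has reduced denominator exactly $d = s^m$.

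First I would record that at least one place $v_0$ has $\inv_{v_0}(D)$ of denominator exactly $s^m$. Indeed, since their least common multiple is $d = s^m$, every local denominator divides $s^m$ and is hence a power of $s$; as the least common multiple of these powers is $s^m$, the maximal power $s^m$ must actually be attained, giving such a $v_0$. Let $M$ denote the (nonempty) set of places whose invariant has denominator exactly $s^m$.

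Next I would eliminate the two ``bad'' places. The place $\infty$ is ruled out immediately: $\Br(\R)$ has order $2$, so $\inv_\infty(D)$ has denominator dividing $2$, whereas $d = s^m > 2$ by hypothesis; thus $\infty \notin M$. The crux is to show $M \neq \{2\}$. Suppose it were; write $\inv_2(D) = a/s^m$ in lowest terms, so $s \nmid a$. Every other place has invariant of denominator dividing $s^{m-1}$, so their sum can be written as $c/s^{m-1}$ for some integer $c$. By part~2 of Proposition~\ref{loc_glob} the total sum vanishes in $\Q/\Z$, i.e.\ $a/s^m + c/s^{m-1} \in \Z$, forcing $s^m \mid a + sc$ and in particular $s \mid a$, a contradiction.

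Therefore $M$ is nonempty, avoids $\infty$, and is not $\{2\}$, so it contains an odd prime $p$. For this $p$ the local index of $D_p$ equals the denominator $s^m = d$ of $\inv_p(D)$, which is the full degree, so $D_p$ is a division algebra. I expect the main obstacle to be the congruence step ruling out $M = \{2\}$; this is exactly where the hypothesis that $d$ is a prime power is essential, since it guarantees that all local denominators are powers of the single prime $s$ and hence that the $s$-adic cancellation argument applies cleanly. For general $d$ the full denominator $d$ need not be attained at any single place, which is why the prime-power assumption cannot be dropped.
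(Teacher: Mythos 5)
Your proof is correct and follows essentially the same route as the paper: index equals exponent forces some place to carry an invariant of denominator exactly $s^m$, the archimedean place is excluded since $\lvert\Br(\R)\rvert=2<d$, and the sum-to-zero relation shows the maximal denominator cannot occur at the single place $p=2$ alone. Your explicit $s$-adic congruence ruling out $M=\{2\}$ is just a more detailed write-up of the paper's observation that at least two places must attain denominator $s^m$.
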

\begin{proof}
Since~$D$ has index $d=s^m$, by Proposition~\ref{loc_glob} it also has exponent~$s^m$. Thus we know that some $\inv_p(D)$ has reduced denominator $s^m$. Note that $\inv_\infty(\Br(\R))=\{0,1/2\}$, so~$p$ must be prime. Also, since the invariants sum to zero, we know that there are actually \emph{at least two}~$p_1$,~$p_2$ such that $\inv_{p_1}(D)$ and $\inv_{p_2}(D)$ have denominator~$s^m$. Thus, we can choose an \emph{odd} prime~$p$ such that $\inv_p(D)$ has denominator~$s^m$. But this means that~$D_p$ has index~$d$ and so is a division algebra.
\end{proof}

As usual, for a splitting field $K|\Q$ of~$D$ we think of $\SL_1(D)$ as a subgroup of $\SL_d(K)$. Let~$N$,~$T$, and $W\cong S_d$ be associated to $\SL_d(K)$ as before. We want to show that $\SL_1(D)$ cannot represent the~$d$-cycles in~$W$, even if~$d$ is a power of 2.

Let~$A$ be the matrix
\[
 A=\begin{pmatrix}
  0 & a_1 & 0 & \cdots & 0 \\
  0 & 0 & a_2 & \cdots & 0 \\
  \vdots  & \vdots  & \ddots & \ddots & \vdots \\
  0 & 0 & 0 & \ddots & a_{d-1} \\
  a_d & 0 & 0 & \cdots & 0
 \end{pmatrix}
\]
\noindent in $\SL_d(K)$. Since $\det{A}=1$, we calculate that $\prod a_i=(-1)^{d-1}$. But we also see that $A^d=\prod a_iI_d$. Thus,~$A$ has multiplicative order either~$d$ or $2d$, for~$d$ odd or even respectively. Moreover, up to conjugation any representative of a~$d$-cycle in $N/T\cong S_d$ is of this form. To show that no conjugate of an element of $\SL_1(D)$ can represent a~$d$-cycle, it therefore suffices to prove the following:

\begin{theorem}\label{thm}
If~$d$ is odd then~$D^\times$ contains no elements of order~$d$, and if~$d$ is even then~$D^\times$ contains no elements of order~$2d$.
\end{theorem}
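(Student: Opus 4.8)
The plan is to recognize that an element of order $n$ in $D^\times$ is precisely a primitive $n$-th root of unity, hence generates the cyclotomic subfield $\Q(\zeta_n)\subseteq D$. First I would invoke Proposition~\ref{div_alg_facts}: since $\Q(\zeta_n)$ is a subfield of $D$, its degree $\varphi(n)=[\Q(\zeta_n):\Q]$ must divide $d$. So in the odd case (where we forbid order $n=d$) the requirement becomes $\varphi(d)\mid d$, and in the even case (order $n=2d$) it becomes $\varphi(2d)\mid d$. The strategy is to show these divisibilities are impossible, treating the bulk of the cases by an elementary parity observation and reserving the genuinely hard situation for the local--global machinery of Section~\ref{sec:div_algs_Q}.

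Writing $d=2^e m$ with $m$ odd, I would first dispose of the case $m>1$. A short computation shows that in both the odd case ($e=0$, $n=d=m$) and the even case ($e\ge 1$, $n=2d=2^{e+1}m$, so $\varphi(n)=2^e\varphi(m)$), the divisibility condition $\varphi(n)\mid d$ collapses to $\varphi(m)\mid m$. Since $m$ is odd and greater than $1$, $\varphi(m)$ is even and therefore cannot divide the odd number $m$. This contradiction eliminates every $d$ whose odd part exceeds $1$, leaving only $d=2^e$ with $e\ge 2$ (recall $d>2$), which is exactly the power-of-two case that Theorem~\ref{min_poly_thm} could not reach.

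For $d=2^e$ I would assume $z\in D^\times$ has order $2d=2^{e+1}$ and set $E\defeq\Q(\zeta_{2^{e+1}})$. Here $\varphi(2^{e+1})=2^e=d$, so $E$ is a \emph{maximal} subfield of $D$, and by Proposition~\ref{div_alg_facts}(3) the algebra $D$ splits over $E$. Now I would apply Lemma~\ref{pm_pow} to produce an odd prime $p$ for which $D_p$ is a division algebra, hence of index $d$. Since $D$ splits over $E$, it splits over each completion $E_{\mathfrak{p}}$ for $\mathfrak{p}\mid p$, and therefore $D_p$ splits over each such $E_{\mathfrak{p}}$. Proposition~\ref{loc_glob}(4) then forces $d=\mathrm{index}(D_p)$ to divide $[E_{\mathfrak{p}}:\Q_p]$; but $\sum_{\mathfrak{p}\mid p}[E_{\mathfrak{p}}:\Q_p]=[E:\Q]=d$ gives $[E_{\mathfrak{p}}:\Q_p]\le d$, so there is a unique prime above $p$ with full local degree $[E_{\mathfrak{p}}:\Q_p]=d$.

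The contradiction comes from cyclotomic arithmetic. Because $p$ is odd and $E=\Q(\zeta_{2^{e+1}})$ is ramified only at $2$, the extension $E|\Q$ is unramified at $p$, so the local degree equals the order of the Frobenius, i.e.\ the order of $p$ in $\Gal(E/\Q)\cong(\Z/2^{e+1}\Z)^\times$. For $e\ge 2$ this group is isomorphic to $\Z/2\Z\times\Z/2^{e-1}\Z$, which is non-cyclic of exponent $2^{e-1}=d/2$; hence no element, and in particular the class of $p$, can have order $d$. Thus the local degree is at most $d/2<d$, contradicting the previous step and completing the argument. I expect the power-of-two case to be the main obstacle: the irreducibility argument of Theorem~\ref{min_poly_thm} genuinely breaks there, and one must instead combine the splitting of $D$ over its cyclotomic maximal subfield with the non-cyclicity of $(\Z/2^{e+1}\Z)^\times$ via the local--global Brauer theory.
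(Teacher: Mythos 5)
Your proposal is correct and follows essentially the same route as the paper: the parity of $\varphi$ eliminates all $d$ with odd part $m>1$, and the remaining case $d=2^e$ is handled by splitting $D$ over the maximal subfield $\Q(\zeta_{2d})$, invoking Lemma~\ref{pm_pow} to get an odd prime $p$ with $D_p$ of index $d$, and contradicting this with the fact that the local degree of $\Q_p(\zeta_{2^{e+1}})$ equals the order of $p$ in the non-cyclic group $(\Z/2^{e+1}\Z)^\times$, which is at most $2^{e-1}<d$. Your explicit identification of the local degree with the order of Frobenius (via unramifiedness at odd $p$) just spells out the Serre citation the paper uses for the same step.
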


\begin{proof}
We first cover the case when $d=2^em$ for odd $m>1$, i.e., the case when~$d$ is not a power of~$2$. Suppose~$D$ contains a primitive $r^{\text{th}}$ root of unity, $\zeta_r$. Then~$D$ features the subfield~$L \defeq \Q(\zeta_r)$ of degree $\varphi(r)$, so by Proposition~\ref{div_alg_facts} $\varphi(r)$ must divide~$d$.

Now, if $r=d$ and~$d$ is odd, then $\varphi(r)$ cannot divide~$d$, since $\varphi(r)$ is even for $r>2$. Also, if~$r=2d$ and~$d$ is even, then $\varphi(r)=\varphi(2^{e+1}m)=2^e\varphi(m)$, and since $m\ge 3$, in this case~$2^{e+1}|\varphi(r)$ and so $\varphi(r)$ cannot divide~$d$. The remaining case is when $d=2^e$. Then~$\varphi(2d)=~d$, so $L=\Q(\zeta_{2d})$ is a maximal subfield of~$D$ and~$D$ splits over~$L$, again by Proposition~\ref{div_alg_facts}.

For prime~$p$, let $L_p \defeq \Q_p(\zeta_{2d})$. Since~$D$ splits over~$L$, also~$D_p$ splits over~$L_p$, for any~$p$. By Proposition~\ref{loc_glob}, for each~$p$ the index of~$D_p$ divides the degree of~$L_p$. We claim that for any odd prime~$p$, this degree satisfies $[L_p:\Q_p]<d$. This will contradict Lemma~\ref{pm_pow}, proving the theorem. By \cite[Section~4.4,~Corollary~1]{serre79}, $[L_p:\Q_p]$ equals the order of~$p$ in $(\Z/2^{e+1}\Z)^\times$. Since~$e>1$, this group is isomorphic to $\Z/2\Z\times\Z/2^{e-1}\Z$, and so~$p$ has order strictly less than~$2^e$. Thus, $[L_p:\Q_p]<d$.
\end{proof}

This is an interesting result in its own right, and also proves that $\SL_1(D)$ does not represent any~$d$-cycles in $N/T\cong S_d$, even in the case that~$d$ is a proper power of 2. There was nothing special about~$F=\Q$, except for the fact that $[\Q(\zeta_r):\Q]=\varphi(r)$. It is easy to see that these results hold for global fields~$F$ other than~$\Q$, provided the relevant cyclotomic extensions have the same degree as in the~$\Q$ case.

\section{Action on the fundamental apartment}\label{sec:fund_apt}

As we have seen, the action of~$D^\times$ on the relevant building is far from weakly transitive. We now analyze a related question, namely, we know that~$D^\times$ does not represent the whole Weyl group, but how much exactly \emph{does} it represent? Thinking of the Weyl group as the stabilizer modulo the fixer of an \emph{arbitrary} apartment, this problem seems difficult. If we consider the \emph{fundamental} apartment however, we can achieve a precise description, at least for ``most" division algebras~$D$.

Let~$D$ be a division algebra of degree~$d$, and suppose~$D$ has a maximal subfield~$K$ that is Galois over the center~$F$ of~$D$. (Division algebras lacking this property exist, but are difficult to construct; the first examples were discovered only in the 1970's, by Amitsur \cite{amitsur72}. Also for certain~$F$, e.g.,~$F$ global, all~$F$-division algebras have this property, and so this restriction is not very severe.) Let $\Gamma=\Gal(K|F)$. Using the notion of a crossed product, we can construct a (right)~$K$-basis $\{x_\sigma\}_{\sigma \in \Gamma}$ for~$D$, with multiplication given by $x_\sigma x_{\tau}=x_{\sigma\tau}a_{\sigma,\tau}$ for some $a_{\sigma,\tau} \in K^\times$ and $bx_\sigma = x_\sigma \sigma(b)$ for $b\in K$; see \cite[Theorem~30.1.1]{lorenz08} for details. The action of~$D^\times$ by left multiplication on the~$d$-dimensional right~$K$-vector space~$D$ is~$K$-linear, and so induces an injective homomorphism $D^\times \hookrightarrow \GL_d(K)$. We will suppress this map and just think of~$D^\times$ as a subgroup of $\GL_d(K)$.

Let~$\Delta$ be the standard spherical building for $\GL_d(K)$. Let~$\Sigma_0$ denote the fundamental apartment, with stabilizer $N$ the group of monomial matrices and fixer $T$ the group of diagonal matrices. Note that the Weyl group $W=N/T$ is isomorphic to~$S_d$. Consider the action of the subgroup~$D^\times$ on~$\Delta$. We claim that we can completely describe the subgroup $W_{D^\times} \defeq \Stab_{D^\times}(\Sigma_0)/\Fix_{D^\times}(\Sigma_0)$ of~$W$. Define $\phi \colon \Gamma \to W_{D^\times}$ to be $\sigma\mapsto x_\sigma\Fix_{D^\times}(\Sigma_0)$. Since $a_{\sigma,\tau}\in~K^\times$ fixes~$\Sigma_0$,~$\phi$ is a homomorphism. Also, if~$x_\sigma$ fixes~$\Sigma_0$ then in particular $x_\sigma x_\sigma$ lies in the~$K$-span of~$x_\sigma$, implying that $\sigma^2=\sigma$, so~$\sigma=1$. Hence~$\phi$ is injective. Lastly, since our choice of standard basis is unique up to~$K$-span \cite{lorenz08},~$\phi$ is a canonical map.

\begin{proposition}\label{div_alg_weyl_gp}
The map~$\phi$ defined above is surjective, and so is a canonical isomorphism.
\end{proposition}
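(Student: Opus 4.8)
The plan is to compute $\Stab_{D^\times}(\Sigma_0)$ and $\Fix_{D^\times}(\Sigma_0)$ explicitly in terms of the crossed-product basis and read the image of $\phi$ off directly. Since injectivity of $\phi$ has already been established, it suffices to prove surjectivity. Recall that, with respect to the right $K$-basis $\{x_\sigma\}_{\sigma\in\Gamma}$, the subgroup $\Stab_{D^\times}(\Sigma_0)=D^\times\cap N$ consists of those $z\in D^\times$ whose left-multiplication matrix is monomial, while $\Fix_{D^\times}(\Sigma_0)=D^\times\cap T$ consists of those acting diagonally. So I would take an arbitrary $z\in\Stab_{D^\times}(\Sigma_0)$ and show it must have the form $x_\tau c$ with $\tau\in\Gamma$ and $c\in K^\times$; modulo the fixer this is exactly $\phi(\tau)$, giving surjectivity.

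To carry this out, write $z=\sum_{\sigma\in\Gamma}x_\sigma c_\sigma$ with $c_\sigma\in K$, which is possible since $\{x_\sigma\}$ is a right $K$-basis of $D$. The left action of $z$ on a basis vector $x_\rho$ is computed using the two defining relations $b x_\sigma=x_\sigma\sigma(b)$ and $x_\sigma x_\tau=x_{\sigma\tau}a_{\sigma,\tau}$, giving
\[
 z x_\rho=\sum_{\sigma\in\Gamma}x_\sigma c_\sigma x_\rho=\sum_{\sigma\in\Gamma}x_{\sigma\rho}\,a_{\sigma,\rho}\,\rho(c_\sigma).
\]
Since $\rho$ is an automorphism of $K$ and each $a_{\sigma,\rho}\in K^\times$, the coefficient of the basis vector $x_{\sigma\rho}$ is nonzero precisely when $c_\sigma\neq 0$. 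Hence the set of basis vectors occurring in $z x_\rho$ is $\{\sigma\rho\mid c_\sigma\neq 0\}$, whose cardinality equals $|\{\sigma\mid c_\sigma\neq 0\}|$, \emph{independently of $\rho$}.

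This $\rho$-independence is the crux of the argument: it says every column of the matrix of $z$ has the same number of nonzero entries, namely $|\{\sigma\mid c_\sigma\neq 0\}|$. For $z$ to be monomial this common number must equal $1$, forcing $c_\sigma=0$ for all but a single index $\sigma=\tau$. Thus $z=x_\tau c_\tau$ with $c_\tau\in K^\times$, and the induced permutation is the left translation $\rho\mapsto\tau\rho$. The same display with only $c_1\neq 0$ surviving identifies $\Fix_{D^\times}(\Sigma_0)=K^\times$ (the genuinely diagonal case, using $x_1=1$), so modulo the fixer $z\equiv x_\tau$, i.e.\ $z\Fix_{D^\times}(\Sigma_0)=\phi(\tau)$. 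This exhibits every coset in $W_{D^\times}$ as some $\phi(\tau)$, so $\phi$ is surjective; conceptually, its image is exactly the left-regular representation of $\Gamma$ inside $S_d=\Sym(\Gamma)$, a subgroup of order $d$.

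I expect the only real obstacle to be the bookkeeping of the crossed-product multiplication — keeping the left/right module conventions straight so that the coefficient of $x_{\sigma\rho}$ in $z x_\rho$ comes out as $a_{\sigma,\rho}\,\rho(c_\sigma)$ and not some mistwisted variant. Once that identity is verified, the observation that the nonzero support of each column is the translate $\{\sigma\mid c_\sigma\neq 0\}\cdot\rho$, of $\rho$-independent size, makes the monomial condition collapse immediately to a single surviving $c_\tau$, and the rest is formal.
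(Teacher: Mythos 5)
Your proof is correct and follows essentially the same route as the paper: expand $z=\sum_\sigma x_\sigma c_\sigma$, compute $zx_\rho=\sum_\sigma x_{\sigma\rho}a_{\sigma,\rho}\rho(c_\sigma)$, and observe that the monomial condition forces all but one coefficient to vanish, so $z=x_\tau c$ and hence $z\Fix_{D^\times}(\Sigma_0)=\phi(\tau)$. The extra observation about the $\rho$-independent column support is a harmless elaboration of the same step.
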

\begin{proof}
Suppose that $z\in D^\times$ is a monomial matrix. Then for any basis element $x_\tau$, $zx_\tau$ is again in the~$K$-span of some basis element. Say $\displaystyle z=\sum_{\sigma \in \Gamma}x_\sigma b_\sigma$, so $\displaystyle zx_{\tau}=\sum_{\sigma \in \Gamma}x_{\sigma\tau} a_{\sigma,\tau} \tau(b_\sigma)$. For this to lie in the span of a single basis element, we must have that all but one of the~$b_\sigma$ are in fact zero. Thus~$z$ is of the form $x_\sigma b$ for some $b\in K^\times$, $\sigma\in\Gamma$. We conclude that the stabilizer of~$\Sigma_0$ in~$D^\times$ is made up precisely of elements of this form. Of course $x_\sigma b\Fix_{D^\times}(\Sigma_0) = x_\sigma \Fix_{D^\times}(\Sigma_0)$, and so indeed~$\phi$ is surjective.
\end{proof}

This yields a precise description of the action of~$D^\times$ on~$\Sigma_0$, given by the subgroup $W_{D^\times} \cong \Gamma$ of~$W$. In fact the map $\phi \colon \Gamma \hookrightarrow W_{D^\times}$ is explicit. Since $W=S_d$ and $|\Gamma|=d$ we can think of~$W$ as the symmetric group on the set~$\Gamma$. Then~$\phi$ is just induced by the left multiplication of~$\Gamma$ on itself. If we fix $\sigma \in \Gamma$ with order~$\ell$ and choose $\tau_1,\dots,\tau_r$ such that $\displaystyle \Gamma = \bigcup_{i=1}^r\langle\sigma\rangle\tau_i$, where~$r=d/\ell$, then $\phi(\sigma)$ has cycle decomposition $(\tau_1~\sigma\tau_1~\cdots~\sigma^{\ell-1}\tau_1)\cdots(\tau_r~\sigma\tau_r~\cdots~\sigma^{\ell-1}\tau_r)$, which in particular consists only of~$\ell$-cycles. This verifies a suggestion of M.~Kassabov that~$D^\times$ should only be able to represent $w\in W$ if the cycle decomposition of~$w$ features cycles all of the same length, at least for the case of the fundamental apartment.

As an aside, we note that the building and fundamental apartment depend on the choice of~$K$, and so the fact that this description depends on~$\Gamma$ is not surprising. That is, if~$D$ contains some other Galois maximal subfield~$K'$ with Galois group $\Gamma'\not\cong\Gamma$, then the resulting building and fundamental apartment are different, and so $W_{D^\times}$ will be different. It would thus be most precise to use the notation $W_{D^\times,K,\Sigma_0}$ but for brevity we will just write $W_{D^\times}$.

Thanks to the description of $W_{D^\times}$ we can also say something about the action of $\SL_1(D)$ on~$\Sigma_0$. Namely, $\SL_1(D)$ represents $\phi(\sigma)$ in $W_{D^\times}$ if and only if there exists $b\in K$ such that $x_\sigma b$ has reduced norm~$1$. This in turn will happen if and only if the reduced norm of~$x_\sigma$ is already a Galois norm of something in~$K$. In general the question of whether the reduced norm of~$x_\sigma$ is a Galois norm is a difficult one, and so the precise determination of $W_{\SL_1(D)} \defeq \Stab_{\SL_1(D)}(\Sigma_0)/\Fix_{\SL_1(D)}(\Sigma_0)$ is difficult. We now specialize to the case of cyclic algebras and see that already this question is at least as difficult as a well-known problem that remains unsolved in many cases.

Let~$D$ be a cyclic algebra $D=(K/F,\sigma,a)$ with $\Gal(K|F)=\langle\sigma\rangle$ and the above standard~$K$-basis now given by $x_{\sigma^i}=x^i$ for $0\le i\le d-1$, where $x \defeq x_\sigma$ and $x^d=a\in F^\times$; for details about cyclic algebras see \cite[Chapter~31]{lorenz08}. In particular for $0\le i,j\le {d-1}$, $a_{\sigma^i,\sigma^j}$ is 1 if $i+j<d$ and is~$a$ otherwise. This allows us to explicitly calculate the reduced norm of the basis elements, namely $x^i$ has reduced norm $(-1)^{i(d-1)}a^i$ for each~$i$. Thus for each~$i$, $\phi(\sigma^i)$ is represented by $\SL_1(D)$ if and only if $(-1)^{i(d-1)}a^i$ is a Galois norm of something in $K^\times$.

%for details about cyclic algebras see \cite[Section~14]{lam01}, \cite[Chapter~31]{lorenz08}.

Consider the ($\sigma$-dependent) isomorphism $\Br(K|F) \to F^\times/N(K^\times)$ under which $[D]\mapsto aN(K^\times)$, described in \cite[Theorem~30.4.4]{lorenz08}. Thanks to this isomorphism we see that for any~$i$,~$a^i$ is a norm if and only if $e(D)|i$, where $e(D)$ is the order of~$[D]$ in $\Br(K|F)$. This shows that determining $W_{\SL_1(D)}$ is essentially equivalent to determining~$e(D)$, and in general it is an open question to determine~$e(D)$ for arbitrary~$D$. We now consider the global case, where we can say much more, and in particular can precisely calculate $W_{\SL_1(D)}$.

Suppose~$F$ is global. Then~$D$ is automatically cyclic, say $D=(K/F,\sigma,a)$, and $e(D)=d$. Both of these facts are results of the well-known Albert-Brauer-Hasse-Noether theorem \cite{roquette05}, \cite[Theorem~10.7(a)]{saltman99}. If $\SL_1(D)$ represents $\phi(\sigma^i)$, then the element $(-1)^{i(d-1)}a^i$ is a Galois norm, and so its square $a^{2i}$ is as well. This tells us that~$i$ can only possibly be~$d$, or $d/2$ if~$d$ is even. The $i=d$ case corresponds to the trivial Weyl group element, which is of course in $W_{\SL_1(D)}$. Suppose now that $i=d/2$. Since $a^i$ is not a Galois norm, $(-1)^{i(d-1)}$ must be $-1$, i.e.,~$i$ is odd and~$d$ is congruent to 2 mod 4. We now have a complete characterization of $W_{\SL_1(D)}$ for the case when~$F$ is global, described in the following:

\begin{theorem}\label{norm_1_weyl_gp}
Let~$F$ be a global field. Let $D=(K/F,\sigma,a)$ be as above, with degree~$d$. If~$d$ is not congruent to 2 mod 4, or if it is but $-a^{d/2}$ is not in $N_{K|F}(K^\times)$, then $W_{\SL_1(D)}=\{1\}$. If~$d$ is congruent to 2 mod 4 and $-a^{d/2}$ \emph{is} in $N_{K|F}(K^\times)$, then $W_{\SL_1(D)}=\{1,\phi(\sigma^{d/2})\}$.\qed
\end{theorem}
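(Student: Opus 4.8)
The plan is to translate the representability question into a single norm condition and then use the two global facts $e(D)=d$ and the Brauer-group isomorphism to pin down exactly which powers of $\sigma$ survive. Recall from the preceding discussion that $\SL_1(D)$ represents $\phi(\sigma^i)$ if and only if $(-1)^{i(d-1)}a^i$ lies in $N_{K|F}(K^\times)$, and that under the isomorphism $\Br(K|F)\cong F^\times/N(K^\times)$ sending $[D]\mapsto aN(K^\times)$, the power $a^i$ is a norm precisely when $e(D)\mid i$, which here means $d\mid i$ since $F$ is global.

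First I would restrict the possible exponents. If $(-1)^{i(d-1)}a^i$ is a norm, then so is its square $a^{2i}$, which forces $d\mid 2i$. Taking $0\le i\le d-1$, this leaves only $i=0$ and, when $d$ is even, $i=d/2$. The value $i=0$ gives $\phi(\sigma^0)=\phi(1)=1$, which is always in $W_{\SL_1(D)}$, so the entire content of the theorem is deciding the fate of the single candidate $i=d/2$.

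Next I would carry out the sign analysis. For $i=d/2$ the element in question is $(-1)^{(d/2)(d-1)}a^{d/2}$. Since $d\nmid d/2$, the power $a^{d/2}$ is itself not a norm, so this element can be a norm only if the sign equals $-1$, i.e. only if $(d/2)(d-1)$ is odd. As $d$ is even, $d-1$ is odd, so this parity is exactly that of $d/2$, which is odd precisely when $d\equiv 2\pmod 4$. Hence if $d\not\equiv 2\pmod 4$ the case $i=d/2$ cannot occur and $W_{\SL_1(D)}=\{1\}$; if $d\equiv 2\pmod 4$ then $\phi(\sigma^{d/2})$ is represented if and only if $-a^{d/2}\in N_{K|F}(K^\times)$, which splits the conclusion into the two stated cases.

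Every step here is short, the only genuine inputs being the global arithmetic already recorded, namely $e(D)=d$ and the Brauer isomorphism, both consequences of Albert--Brauer--Hasse--Noether. The main (quite mild) obstacle is bookkeeping the sign $(-1)^{i(d-1)}$ correctly and confirming that it is precisely this sign, combined with $a^{d/2}$ failing to be a norm, that produces the $d\equiv 2\pmod 4$ dichotomy; everything else follows directly from the divisibility $d\mid 2i$.
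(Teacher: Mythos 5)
Your proposal is correct and follows essentially the same route as the paper: the paper also reduces to the norm condition on $(-1)^{i(d-1)}a^i$, squares to get $d\mid 2i$ via $e(D)=d$, and then uses the sign analysis at $i=d/2$ to isolate the $d\equiv 2\pmod 4$ case. The only difference is cosmetic (you index $0\le i\le d-1$ where the paper uses $i=d$ for the trivial element).
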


We observe that when~$d=2$ this proposition says that $\Stab_{\SL_1(D)}(\Sigma_0)$ acts transitively on~$\C(\Sigma_0)$ if and only if $-a\in N_{K|F}(K^\times)$. In \cite{abramenko07}, it is shown that if~$d=2$ and $F=\Q$ then $\SL_1(D)$ acts weakly transitively if and only if $-1\in D^2$. The conditions $-a\in N_{K|F}(K^\times)$ and $-1\in D^2$ would thus seem to be related. Indeed if $-a\in N_{K|F}(K^\times)$, say $-a=b\sigma(b)$ for~$b\in K$, then $(xb\I)^2=x^2\sigma(b\I)b\I=a(-a)\I=-1$ so $-1$ is a square. The converse need not be true, since $\SL_1(D)$ can act weakly transitively without doing so via~$\Sigma_0$. For instance if $K=\Q(i)$ and $a=-3$ then of course $-1\in D^2$ but $-a=3$ is not a Galois norm. In this case then $\SL_1(D)$ acts weakly transitively on~$\Delta$ via some apartment other than~$\Sigma_0$.

We conclude with a few words regarding affine buildings. Let~$F$ be a global field and $D=(K/F,\sigma,a)$ as above, with degree~$d$. For technical reasons we assume the characteristic of~$F$ does not divide~$d$. Suppose~$K$ is embedded in $F_{\nu}$ for some non-archimedean valuation~$\nu$ of~$F$. Let $F_{\nu}$ have valuation ring $R$ and residue field~$k$. Let $\Delta_a$ be the affine building on which~$\SL_d(F_{\nu})$ acts strongly transitively with respect to~$\Abar$, and let $X_0$ denote the fundamental affine apartment.

Let~$W$, $N$, and $T$ be the standard spherical data for $\SL_d(F_{\nu})$, and let $W_a$, $T_a$ be the standard affine data. Then $W_a=N/T_a$ and $W=N/T$, so if $Q \defeq T/T_a$ we have the short exact sequence
$$1 \to Q \to W_a \to W \to 1.$$
In fact the inclusion $W\hookrightarrow W_a$ provides a splitting, and we get $W_a=W\ltimes Q$ \cite[Section~6.9.2]{abramenko08}. We call $Q$ the group of \emph{translations}.

%In fact the inclusion $W\hookrightarrow W_a$ provides a splitting, and we get $W_a=W\ltimes Q$ \cite[Section~6.9.2]{abramenko08}, \cite[Section~6.2.1]{bourbaki02}.

It turns out that $\SL_1(D)$ represents every element of $Q$, i.e., $Q$ is contained in $(W_a)_{\SL_1(D)} \defeq \Stab_{\SL_1(D)}(X_0)/\Fix_{\SL_1(D)}(X_0)$; this follows for similar reasons as in the proof of Lemma~2.2 in~\cite{abramenko07}. This implies that $(W_a)_{\SL_1(D)}=W_{\SL_1(D)}\ltimes Q$. As we have seen $W_{\SL_1(D)}$ is usually trivial, and if not then it has order 2. Thus despite $\SL_1(D)$ acting Weyl transitively on $\Delta_a$, it acts on $X_0$ only by translations in most cases, and only by a product of translations with a single transposition in all other cases.

% --------------------------------------------------------------------

\bibliographystyle{alpha}
\bibliography{bibdata}

% --------------------------------------------------------------------

\end{document}